\newcommand{\Arg}{\operatorname{Arg}}
\renewcommand{\Re}{\operatorname{Re}}
\renewcommand{\(}{\left\(}
\renewcommand{\)}{\right\)}
\renewcommand{\[}{\left\[}
\renewcommand{\]}{\right\]}
\numberwithin{equation}{section}
 \theoremstyle{plain}
\newtheorem{theorem}{Theorem}[section]
\newtheorem{lemma}[theorem]{Lemma}
\newtheorem{remark}[]{Remark}
\newtheorem{corollary}[theorem]{Corollary}
\def\proof{\@ifnextchar[{\@oproof}{\@nproof}}
\def\@oproof[#1][#2]{\trivlist\item[\hskip\labelsep\textit{#2 Proof of\
#1.}~]\ignorespaces}
\def\@nproof{\trivlist\item[\hskip\labelsep\textit{Proof.}~]\ignorespaces}
\begin{document}
\title[Zeros of Dedekind zeta function on $\Re(s)=1/2$]{Number Field Analogue of Jacobi Theta Relation And Zeros of Dedekind zeta function on $\Re(s)=1/2$}



\author{Diksha Rani Bansal}
\address{Diksha Rani\\ Department of Mathematics \\
Indian Institute of Technology Indore \\
Indore, Simrol, Madhya Pradesh 453552, India.} 
\email{dikshaba1233@gmail.com,  mscphd2207141001@iiti.ac.in}

\author{Bibekananda Maji}
\address{Bibekananda Maji\\ Discipline of Mathematics \\
Indian Institute of Technology Indore \\
Indore, Simrol, Madhya Pradesh 453552, India.} 
\email{bibek10iitb@gmail.com, bibekanandamaji@iiti.ac.in}

\thanks{2020 \textit{Mathematics Subject Classification.} Primary 11M06,  11R42; Secondary 11M26.\\
\textit{Keywords and phrases.} Riemann zeta function, Functional equation, Jacobi theta relation,  Dedekind zeta function,  Non-trivial zeros,  Critical line.}

\begin{abstract}
In 1914, Hardy proved that there are infinitely many non-trivial zeros of the Riemann zeta function $\zeta(s)$ on the critical line $\Re(s)=1/2$ using the Jacobi theta relation. In this paper, we first establish a number field analogue of the Jacobi theta relation and as an application, we show the existence of infinitely many non-trivial zeros of the Dedekind zeta function $\zeta_\mathbb{F}(s)$ on $\Re(s)=1/2$, for any number field $\mathbb{F}$. Quite interestingly, we also prove that the Jacobi theta relation is equivalent to an intriguing identity of Hardy, Littlewood and Ramanujan.
\end{abstract}

\maketitle

\section{{\bf Introduction}}
The Riemann zeta function $\zeta(s)$ is regarded as one of the most important special functions in mathematics due to its connection with prime numbers and complex nature of non-trivial zeros.  Riemann \cite{Riemann},  in his seminal paper,  showed that $\zeta(s)$ can be analytically continued to the whole complex plane except for a simple pole at $s=1$ and proved the following beautiful symmetric functional equation:
\begin{align}\label{Functional eqn of zeta}
\pi^{-\frac{s}{2}}\Gamma\left(\frac{s}{2}\right)\zeta(s) = \pi^{-\frac{(1-s)}{2}}\Gamma\left(\frac{1-s}{2}\right)\zeta(1-s).
\end{align}
In the same paper,  he conjectured that all the non-trivial zeros of $\zeta(s)$ lie on the critical line $\Re(s)=1/2$,  which is popularly known as the Riemann hypothesis (RH).   
Riemann proved the functional equation \eqref{Functional eqn of zeta} using the following  Jacobi theta relation: 
\begin{align}\label{Jacobi theta transformation}
W_1\left(\frac{1}{x}\right) = \sqrt{x} W_1(x),
\end{align}
where $W_1(x)$ is the well-known Jacobi theta function, defined as 
\begin{align}\label{Jacobi theta function}
W_1(x) = 1 + 2\sum_{n = 1}^{\infty}e^{-\pi n^2 x}, \, \, \, \Re(x) > 0.
\end{align}
Moreover, one can prove that the above Jacobi theta relation \eqref{Jacobi theta transformation} is equivalent to the functional equation \eqref{Functional eqn of zeta} of $\zeta(s)$. 
Next, we present a similar relation formula due to Ramanujan \cite[p.~97]{GB},  \cite[p.~253]{Ramanujan}, namely, for $\Re(x) > 0$,
\begin{align}\label{Koshliakov Formula}
W_2\left(\frac{1}{x} \right) = \sqrt{x}W_2(x).
\end{align}
Here, $W_2(x)$ is given by
$$W_2(x) = \gamma - \log(4\pi) + \log(\sqrt{x}) + 4\sum_{n=1}^{\infty}d(n)\text{K}_0(2n\pi \sqrt{x}),$$
where $\gamma$ is the Euler-Mascheroni constant, $d(n)$ denotes the number of divisors of $n$, and $K_0$ is the modified Bessel function of the second kind. 
Besides Ramanujan, Koshliakov also recorded the above-mentioned identity \eqref{Koshliakov Formula} in his work \cite{Koshliakov}. Ferrar \cite{Ferrar1936} was the first person to prove the Ramanujan-Koshliakov identity \eqref{Koshliakov Formula} using the functional equation for $\zeta^2(s)$.  
More generally,  Ferrar \cite[p.~102]{Ferrar1936} derived a simultaneous extension of Jacobi theta relation and Ramanujan-Koshliakov formula by obtaining an identity under the assumption of the functional equation for $\zeta^k(s)$ for any $k \in \mathbb{N}$.  
It is worth to remark that, the functional equation of $\zeta^k(s)$ can be derived by assuming Ferrar's identity \cite[p.~102]{Ferrar1936}, see Corollary \ref{Equivalence for zeta power k}.

Motivated by Ferrar's identity \cite[Equation (2)]{Ferrar1936},  in this paper,   we establish an identity,  see Theorem \ref{BM1},  which is equivalent to the functional equation of $\zeta_{\mathbb{F}}^k(s)$,  where $\zeta_{\mathbb{F}}(s)$ is the Dedekind zeta function defined below.  

Let $\mathbb{F}$ be a number field of degree $d = r_1 + 2r_2$, where $r_1$ is the number of real embeddings and $r_2$ is the number of pairs of complex conjugate embeddings of $\mathbb{F}$. Let $D$ denote the absolute value of the discriminant of $\mathbb{F}$. Let $\mathcal{O}_{\mathbb{F}}$ denote the ring of integers of $\mathbb{F}$ and $\mathfrak{N}$ be the norm map. We define $\mathtt{a}_{\mathbb{F}}(n)$ to be the number of ideals in $\mathcal{O}_{\mathbb{F}}$ with norm $n$. Then the Dedekind zeta function corresponding to the number field $\mathbb{F}$ is defined by
\begin{align*}
\zeta_{\mathbb{F}}(s) = \sum_{\mathfrak{a} \subset \mathcal{O}_{\mathbb{F}}} \frac{1}{\mathfrak{N}(\mathfrak{a})^s} = \sum_{n=1}^{\infty}\frac{\mathtt{a}_{\mathbb{F}}(n)}{n^s}, 
\end{align*}
for $\Re{(s)} > 1,$ and the sum over $\mathfrak{a}$ runs through all non-zero integral ideals of $\mathcal{O}_{\mathbb{F}}$.
The Dedekind zeta function satisfies the following functional equation: 
\begin{align}\label{Dedekind_zeta_functional_equation}
\Omega_{\mathbb{F}}(s) = \Omega_{\mathbb{F}}(1-s),
\end{align}
where 
\begin{align}\label{Omega_F of fnal eqn}
\Omega_{\mathbb{F}}(s) &= \left(\frac{D}{4^{r_2}\pi^d}\right)^{\frac{s}{2}}\Gamma^{r_1}\left(\frac{s}{2}\right)\Gamma^{r_2}(s)\zeta_{\mathbb{F}}(s).
\end{align}
From \eqref{Dedekind_zeta_functional_equation}, we can also see that the functional equation for $\zeta_{\mathbb{F}}^{-1}(s)$ is given as
\begin{align*}
\Omega_{\mathbb{F}}^{-1}(s) = \Omega_{\mathbb{F}}^{-1}(1-s),
\end{align*}
which is equivalent to
\begin{align}\label{Functional eqn for dedekind zeta power -k}
\Lambda_{\mathbb{F}}(s) = \Lambda_{\mathbb{F}}(1-s),
\end{align}
where $\Lambda_{\mathbb{F}}(s) = \left(\frac{D}{4^{r_2}\pi^d}\right)^{\frac{s}{2}}\Gamma^{r_1}\left(\frac{s}{2}\right)\Gamma^{r_2}(s)\zeta^{-1}_{\mathbb{F}}(1-s)$.
It is known that $\zeta_{\mathbb{F}}(s)$ has a simple pole at $s=1$ with the residue given by the class number formula: 
\begin{align*}
\lim_{s \rightarrow 1}(s-1)\zeta_{\mathbb{F}}(s) = \frac{2^{r_1}(2\pi)^{r_2}}{\sqrt{D}}\frac{R_{\mathbb{F}}h_{\mathbb{F}}}{w_{\mathbb{F}} } :=H_{ \mathbb{F}},
\end{align*}
where $w_{\mathbb{F}}$ is the number of roots of unity contained in $\mathcal{O}_\mathbb{F}, h_{\mathbb{F}}$ denotes the class number, and $R_{\mathbb{F}}$ is the regulator of $\mathbb{F}$. 
Further,  we know that the order of zero of $\zeta_{\mathbb{F}}(s)$ at $s=0$ is given by $r=r_1+r_2-1$, which is also the rank of the unit group of $\mathbb{F}$. The leading term in the Laurent series expansion of $\zeta_{\mathbb{F}}(s)$ at $s=0$ is given by 
\begin{align}\label{Laurent series_at s=0_1st coeff}
\lim_{s\rightarrow 0} \frac{\zeta_{\mathbb{F}}(s)}{s^r} = - \frac{R_\mathbb{F} h_\mathbb{F}}{w_\mathbb{F}} :=C_{\mathbb{F}}.  
\end{align}
Quite interestingly, in this paper, we show that the functional equation for $\zeta^{-1}(s)$ is equivalent to the below well-known identity of Hardy, Littlewood and Ramanujan \cite[p.~158]{HLR1916}. 
Let $\mu(n)$ be the M\"{o}bius function. For any $x>0$, we have
\begin{align}\label{Ramanujan identity for 1/zeta}
\sum_{n=1}^\infty \frac{\mu (n)}{n}\exp\left(-\frac{x}{n^2}\right) = \sqrt{\frac{\pi}{x}}\sum_{n=1}^\infty \frac{\mu (n)}{n}\exp\left(-\frac{\pi^2}{n^2 x}\right) - \frac{1}{2\sqrt{\pi}}\sum_{\rho}\left(\frac{\pi}{\sqrt{x}}\right)^\rho \frac{\Gamma\left(\frac{1-\rho}{2}\right)}{\zeta'(\rho)},
\end{align}
where the sum over $\rho$ runs through the non-trivial zeros of $\zeta (s)$. 
This identity has been studied and generalized by many mathematicians, interested reader can see \cite{AGM22, BK23, Berndt4, Dixit12, Dixit15, Dixit16, GM23}. Recently, Dixit, Gupta and Vatwani \cite{DGV2022} extended the identity \eqref{Ramanujan identity for 1/zeta} for the Dedekind zeta function over any number field. 


One of the significant milestones in the pursuit of the Riemann hypothesis came through the work of Hardy \cite{Hardy}.  In 1914,  using the Jacobi theta relation \eqref{Jacobi theta transformation},  Hardy proved that there are infinitely many zeros of $\zeta(s)$ on the critical line $\Re(s) =\frac{1}{2}$. This result, while not proving the Riemann hypothesis itself, demonstrated the deep connection between the non-trivial zeros of $\zeta(s)$ and the critical line,  hinting at the possibility that all non-trivial zeros could indeed lie on this line. This result has been generalized for various Dirichlet series, with Hecke \cite{Hecke1937} being the first to show that the Dedekind zeta function has infinite number of zeros on the critical line for an ideal class in an imaginary quadratic field. Later, Chandrasekharan and  Narasimhan \cite{CN1968} extended this result to include both real and imaginary quadratic fields, while Berndt \cite{Berndt1} further extended it for any number field of degree $d = r_1 + 2r_2$ with $0 \leq r_1 \leq 3$. To learn more about zeros of Dedekind zeta function, readers are encouraged to see \cite{Berndt2, Berndt3, Brown77}. 

One of the main aims of the current paper is to develop an analogue of the Jacobi theta relation \eqref{Jacobi theta transformation} for  the Dedekind zeta function $ \zeta_{\mathbb{F}}(s)$  and employ Hardy's approach to prove the existence of infinitely many zeros of $ \zeta_{\mathbb{F}}(s)$  on the critical line $\Re(s)=1/2$.  
Before stating our main results, we need the following definitions.

The Steen function $V(z| a_1, a_2, \ldots, a_n)$ is defined as 
\begin{align*}
V(x| a_1, a_2, \ldots, a_n) := \frac{1}{2\pi i}\int_{(c)}\prod_{j=1}^{n}\Gamma(s + a_j)x^{-s}\text{d}s, \quad \text{for} \,\,\, |\Arg(x)| < \frac{\pi n}{2},
\end{align*}
where $(c)$ in the integral refers to the vertical line running from $c - i\infty$ to $c + i\infty$. Here, the argument $\Arg(x)$ is the principal argument $-\pi < \Arg(x) \leq \pi$.
It is assumed that all the poles of $\Gamma(s + a_j)$ lie on one side of the vertical line $(c)$.  This function is a special case of the more general Meijer $G$-function \cite[p.~415, Definition~16.17]{OLBC2010}. Particular choices of parameters recover various well-known special functions. For instance,
\begin{align}
V(x| 0) &= e^{-x}, \,\,\, \text{if} \,\,\, c > 0, \nonumber \\
V(x| a, b) &= 2x^{\frac{1}{2}(a+b)}\text{K}_{a-b}(2x^{\frac{1}{2}}), \,\,\, \text{if} \,\,\, c > \max\{-a, -b \}, \label{Bessel fn formula}
\end{align}
where $\text{K}_{\nu}$ is the modified Bessel function of second kind of order $\nu$. Further information about the Steen function can be found in \cite{Steen},  \cite[p.~63]{KT}.

We next examine a function, related to the Steen function, that was first studied by Koshliakov \cite[p.~122]{KoshI} and recently used by Dixit, Gupta, and Vatwani \cite[p.~5, Equation~(2.2)]{DGV2022}.

Let $c = \Re(s) > 0$. Then the function $\tilde{Z}_{r_1, r_2}: \mathbb{R} \rightarrow \mathbb{C}$ is defined as
\begin{align}\label{Defn of Z}
\tilde{Z}_{r_1, r_2}(x) := \frac{1}{2\pi i}\int_{(c)}\Gamma^{r_1}\left(\frac{s}{2}\right)\Gamma^{r_2}(s)x^{-s} \text{d}s, \quad \text{where} \,\,\, |\Arg(x)| < \frac{\pi d}{4}.
\end{align}
If we shift the line of integration to the left, we get the following representation, for $-1 < \Re(s) = b <0$,
\begin{align}\label{Defn of Z without tilde}
{Z}_{r_1, r_2}(x) :=  \frac{1}{2\pi i}\int_{(b)}\Gamma^{r_1}\left(\frac{s}{2}\right)\Gamma^{r_2}(s)x^{-s} \mathrm{d}s =  \tilde{Z}_{r_1, r_2}(x) - \mathfrak{R}_0(x), 
\end{align}
where $ \mathfrak{R}_0(x)$ is the residue of the above integrand function due to a pole of order $r_1 + r_2$ at $s = 0$. 
Some particular cases can be given by
\begin{align}
\tilde{Z}_{0, r_2}(x) &= V(x| \bar{0}_{r_2}), \label{Z value img}\\ 
\tilde{Z}_{r_1, 0}(x) &= 2V(x^2| \bar{0}_{r_1}), \label{Z value real} \\
{Z}_{r_1, 0}(x) &= 2V(x^2|\bar{0}_{r_1}) - \mathfrak{R}_0(x), \label{Z without tilde at real} \\
{Z}_{1, 0}(x) &= 2(e^{-x^2} - 1), \label{Z without tilde value at Q}
\end{align}
where $\bar{0}_{m}$ is the $m$-tuple whose all entries equal 0, and $\mathfrak{R}_0(x)$ in \eqref{Z without tilde at real} is given by
\begin{align*}
\mathfrak{R}_0(x) = \frac{1}{(r_1 - 1)!}\lim_{s \rightarrow 0}\frac{\mathrm{d}^{r_1 -1}}{\mathrm{d}s^{r_1-1}}\left(s^{r_1} \Gamma^{r_1}\left(\frac{s}{2}\right)x^{-s}\right).
\end{align*}
\begin{remark}
Using the duplication formula for $\Gamma(s)$, one can check that 
\begin{align*}
\tilde{Z}_{r_1, r_2}(x) &= \frac{2}{2^{r_2}\pi^{\frac{r_2}{2}}}V\left(\frac{x^2}{4^{r_2}} \bigg| \bar{0}_{r_1 +  r_2}, \bar{\left(\frac{1}{2}\right)}_{r_2}\right), \quad \text{for} \,\,\,\, |\Arg(x)| < \frac{\pi d}{4}.
  \end{align*}
\end{remark}
Before moving further, we define the following Dirichlet series for $\Re(s) > 1$:
\begin{align}
\zeta_{\mathbb{F}}^k(s) := \sum_{n=1}^{\infty}\frac{\mathtt{a}_{\mathbb{F}, k}(n)}{n^s}, \quad
\zeta^k(s) := \sum_{n=1}^{\infty}\frac{d_k(n)}{n^s}, \label{Coeff a and d_k}
\end{align}
where $\mathtt{a}_{\mathbb{F}, k}(n) = \mathtt{a}_{\mathbb{F}}(n)*\cdots*\mathtt{a}_{\mathbb{F}}(n)$ i.e. Dirichlet convolution of $\mathtt{a}_{\mathbb{F}}(n),\, k-$times and $d_k(n)$ counts the number of ways $n$ can be written as product of $k$ positive integers. Hence, for $k=2$, we have $d_k(n) = d(n)$.
We now state the main results of this paper.
                   
\section{{\bf Main Results}}

\begin{theorem}\label{BM1}
Let $k$ be a positive integer and $\mathbb{F}$ be any number field of degree $d$. Let $\Omega_{\mathbb{F}}(s)$, $\tilde{Z}_{r_1, r_2}(x)$ and $\mathtt{a}_{\mathbb{F}, k}(n)$ be defined as in \eqref{Omega_F of fnal eqn}, \eqref{Defn of Z} and \eqref{Coeff a and d_k}, respectively. The functional equation for $\zeta_\mathbb{F}^k(s)$ 
is equivalent to the theta relation 
\begin{align}\label{Theta reln for dedekind zeta power k}
W_{\mathbb{F}, k}\left(\frac{1}{x}\right) = \sqrt{x}W_{\mathbb{F}, k}(x), \quad \text{for} \,\,\, |\Arg(x)| < \frac{\pi d}{2},
\end{align}
where $W_{\mathbb{F}, k}(x)$ is given by
\begin{align}\label{W_F, k(x)}
W_{\mathbb{F}, k}(x) := \sum_{n=1}^{\infty}\mathtt{a}_{\mathbb{F}, k}(n)\tilde{Z}_{kr_1, kr_2}\left(\frac{2^{kr_2}\pi^{kd/2} n\sqrt{x}}{\sqrt{D^k}} \right) - R_0(x),
\end{align}
with
 $$
 R_0(x) = \frac{1}{(k-1)!}\lim_{s \rightarrow 0} \frac{\mathrm{d}^{k-1}}{\mathrm{d}s^{k-1}}\left(s^k \Omega_{\mathbb{F}}^k(s)x^{-s/2}\right).
 $$
\end{theorem}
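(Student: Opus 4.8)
The plan is to route everything through the Mellin--Barnes integral attached to $\tilde{Z}_{kr_1,kr_2}$ and to read the equivalence off the symmetry $s\leftrightarrow 1-s$ of the completed series $\Omega_{\mathbb{F}}^k(s):=\Omega_{\mathbb{F}}(s)^k$, whose functional equation $\Omega_{\mathbb{F}}^k(s)=\Omega_{\mathbb{F}}^k(1-s)$ is the meaning of ``the functional equation for $\zeta_{\mathbb{F}}^k(s)$''. First I would record the bridge identity: substituting \eqref{Defn of Z} into \eqref{W_F, k(x)} and interchanging the (for $\Re(s)=c>1$ absolutely convergent) sum and integral gives, with $\xi_k:=2^{kr_2}\pi^{kd/2}D^{-k/2}$ and using $\xi_k^{-s}=\left(D/(4^{r_2}\pi^d)\right)^{ks/2}$,
\begin{equation*}
\sum_{n=1}^{\infty}\mathtt{a}_{\mathbb{F},k}(n)\,\tilde{Z}_{kr_1,kr_2}\left(\xi_k\,n\sqrt{x}\right)=\frac{1}{2\pi i}\int_{(c)}\Omega_{\mathbb{F}}^k(s)\,x^{-s/2}\,\mathrm{d}s,\qquad c>1 .
\end{equation*}
Hence $W_{\mathbb{F},k}(x)$ is this contour integral minus $R_0(x)$, and $R_0(x)$ is exactly the residue of $\Omega_{\mathbb{F}}^k(s)x^{-s/2}$ at its order-$k$ pole $s=0$. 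The theorem now becomes a statement about moving this contour.

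For ``functional equation $\Rightarrow$ theta relation'' I would shift the line from $\Re(s)=c\in(1,2)$ to $\Re(s)=1-c\in(-1,0)$. As $\Omega_{\mathbb{F}}^k(s)$ is holomorphic except for poles of order $k$ at $s=0$ and $s=1$, only those two residues $R_0(x)$ and $R_1(x)$ are crossed. In the shifted integral I invoke $\Omega_{\mathbb{F}}^k(s)=\Omega_{\mathbb{F}}^k(1-s)$ and substitute $s\mapsto 1-s$, turning it into $x^{-1/2}$ times the same contour integral evaluated at $1/x$. The key algebraic step is the residue bookkeeping: computing $\mathrm{Res}_{s=1}$ via $s=1-u$ and comparing with $R_0(1/x)=\mathrm{Res}_{s=0}\Omega_{\mathbb{F}}^k(s)x^{s/2}$ yields $R_1(x)=-x^{-1/2}R_0(1/x)$. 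Feeding this back, the two $R_0$-terms cancel and one is left with exactly $W_{\mathbb{F},k}(x)=x^{-1/2}W_{\mathbb{F},k}(1/x)$, which is \eqref{Theta reln for dedekind zeta power k}.

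For the converse I would run the same computation backwards through the Mellin transform. Writing $G(x):=W_{\mathbb{F},k}(x)+R_0(x)$, the substitution above shows $\int_0^\infty G(x)x^{s/2-1}\,\mathrm{d}x=2\Omega_{\mathbb{F}}^k(s)$ for $\Re(s)>1$, a formula using only the definitions of $\tilde{Z}_{kr_1,kr_2}$ and $\zeta_{\mathbb{F}}^k$. I then split $\int_0^\infty=\int_0^1+\int_1^\infty$, send $x\mapsto 1/x$ in the first piece, and insert $G(1/x)=\sqrt{x}\,W_{\mathbb{F},k}(x)+R_0(1/x)=\sqrt{x}\,G(x)-\sqrt{x}\,R_0(x)+R_0(1/x)$ from the assumed theta relation. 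This writes $2\Omega_{\mathbb{F}}^k(s)$ as two $G$-integrals over $[1,\infty)$ that are visibly swapped by $s\mapsto 1-s$, plus elementary integrals of $R_0$; since $R_0$ is a polynomial in $\log x$ of degree $k-1$, the latter reduce to $\sum_j c_j\,j!\,2^{j+1}\left[(-1)^j s^{-(j+1)}-(s-1)^{-(j+1)}\right]$, which a short check shows is itself invariant under $s\mapsto 1-s$. Invariance of the whole right-hand side gives $\Omega_{\mathbb{F}}^k(s)=\Omega_{\mathbb{F}}^k(1-s)$.

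The routine but indispensable obstacle throughout is analytic justification: the interchange of sum and integral and every contour shift rely on Stirling's formula, which makes $\Gamma^{kr_1}(s/2)\Gamma^{kr_2}(s)$ decay exponentially on vertical lines while $\zeta_{\mathbb{F}}^k(s)$ grows only polynomially, so that $\Omega_{\mathbb{F}}^k(s)x^{-s/2}$ is integrable and the horizontal connecting segments vanish. For the converse one must also pin down $G$ at the endpoints (rapid decay as $x\to\infty$ from the Bessel-type decay of $\tilde{Z}_{kr_1,kr_2}$, and growth of order $x^{-1/2}(\log x)^{k-1}$ as $x\to 0$ forced through the theta relation) to secure convergence of the Mellin transform on $\Re(s)>1$. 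The one genuinely delicate point is the residue identity $R_1(x)=-x^{-1/2}R_0(1/x)$: it is precisely this cancellation of polar parts that manufactures the clean factor $\sqrt{x}$ in \eqref{Theta reln for dedekind zeta power k}.
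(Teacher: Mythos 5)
Your proposal is correct and follows essentially the same route as the paper: the same bridge identity $\sum_n \mathtt{a}_{\mathbb{F},k}(n)\tilde{Z}_{kr_1,kr_2}(\cdot)=\frac{1}{2\pi i}\int_{(c)}\Omega_{\mathbb{F}}^k(s)x^{-s/2}\,\mathrm{d}s$, the same contour shift past the order-$k$ poles at $s=0,1$ with the residue identity $R_1(x)=-x^{-1/2}R_0(1/x)$, and the same converse via splitting the Mellin transform at $x=1$, inserting the theta relation, and checking that the resulting $R_0$-integrals (your explicit formula agrees with the paper's $J(s)$ after rewriting $-(s-1)^{-(j+1)}=(-1)^j(1-s)^{-(j+1)}$) are invariant under $s\mapsto 1-s$. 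Your analytic justifications (Stirling decay for the shifts, Bessel-type decay of $\tilde{Z}$ at infinity, $x^{-1/2}(\log x)^{k-1}$ growth of the series near $0$ via the theta relation) match the bounds the paper records explicitly, so no gap remains.
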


\begin{remark}
For $d=1$, that is, $\mathbb{F} = \mathbb{Q}$, the relation \eqref{Theta reln for dedekind zeta power k} holds for all $x$ with $\Re(x) > 0$. For quadratic fields, it is valid for all $x \in \mathbb{C} \setminus (-\infty, 0]$, whereas if the field $\mathbb{F}$ has degree $d \geq 3$, the relation \eqref{Theta reln for dedekind zeta power k} holds for any complex number $x \in \mathbb{C} \setminus \{0\}$.
\end{remark}

In particular, if we take $k=1$, we get the following result for the Dedekind zeta function.
\begin{corollary}\label{Jacobi theta for dedekind zeta}
Let $\mathbb{F}$ be a number field with degree $d$. Then the functional equation for $\zeta_{\mathbb{F}}(s)$ is equivalent to the theta relation 
\begin{align}\label{Theta reln for dedekind zeta}
W_{\mathbb{F}, 1}\left(\frac{1}{x}\right) = \sqrt{x}W_{\mathbb{F}, 1}(x), \quad \text{for} \,\,\, |\Arg(x)| < \frac{\pi d}{2},
\end{align}
where $W_{\mathbb{F}, 1}(x)$ is given by 
\begin{align}\label{W_F,1(x)}
W_{\mathbb{F}, 1}(x) = \sum_{n=1}^{\infty}\mathtt{a}_{\mathbb{F}}(n)\tilde{Z}_{r_1, r_2}\left(\frac{2^{r_2}\pi^{d/2} n\sqrt{x}}{\sqrt{D}} \right) - 2^{r_1}C_{\mathbb{F}},
\end{align}
where $C_{\mathbb{F}}$ is defined in \eqref{Laurent series_at s=0_1st coeff}. 
\end{corollary}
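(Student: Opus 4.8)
The plan is to obtain this corollary as the $k=1$ specialization of Theorem \ref{BM1}, the only genuine computation being the explicit evaluation of the residue term $R_0(x)$.

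First I would set $k=1$ throughout the statement of Theorem \ref{BM1}. Under this choice the convolution coefficients collapse to $\mathtt{a}_{\mathbb{F},1}(n)=\mathtt{a}_{\mathbb{F}}(n)$, the Steen-type function reduces to $\tilde{Z}_{kr_1,kr_2}=\tilde{Z}_{r_1,r_2}$, the argument $\frac{2^{kr_2}\pi^{kd/2}n\sqrt{x}}{\sqrt{D^k}}$ becomes $\frac{2^{r_2}\pi^{d/2}n\sqrt{x}}{\sqrt{D}}$, and the functional equation for $\zeta_{\mathbb{F}}^1(s)$ is exactly the functional equation \eqref{Dedekind_zeta_functional_equation}. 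The asserted equivalence and the theta relation \eqref{Theta reln for dedekind zeta}, together with its range $|\Arg(x)|<\frac{\pi d}{2}$, are then inherited verbatim from the theorem. What remains is to verify that the residue term $R_0(x)$ of \eqref{W_F, k(x)} reduces to the $x$-independent constant $2^{r_1}C_{\mathbb{F}}$ appearing in \eqref{W_F,1(x)}.

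For that residue, I would observe that at $k=1$ the prefactor $\frac{1}{(k-1)!}$ is $1$ and the differentiation order $k-1$ is $0$, so $R_0(x)=\lim_{s\to 0}s\,\Omega_{\mathbb{F}}(s)\,x^{-s/2}$. Because $x^{-s/2}\to 1$ as $s\to 0$, this is precisely the residue of $\Omega_{\mathbb{F}}(s)$ at $s=0$. I would then read off the local behaviour of each factor in \eqref{Omega_F of fnal eqn} near the origin: the exponential prefactor tends to $1$; the Gamma factors give $\Gamma^{r_1}(s/2)\sim(2/s)^{r_1}$ and $\Gamma^{r_2}(s)\sim s^{-r_2}$, producing a pole of order $r_1+r_2$ with leading term $2^{r_1}s^{-(r_1+r_2)}$; and by \eqref{Laurent series_at s=0_1st coeff} the Dedekind zeta function vanishes to order $r=r_1+r_2-1$ with $\zeta_{\mathbb{F}}(s)\sim C_{\mathbb{F}}s^{r}$. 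Multiplying these, the pole of order $r_1+r_2$ is cancelled down to a simple pole, so $\Omega_{\mathbb{F}}(s)\sim 2^{r_1}C_{\mathbb{F}}/s$ and the residue is $2^{r_1}C_{\mathbb{F}}$, as claimed.

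The step carrying the real content is this residue evaluation: it is exactly the balancing $r=r_1+r_2-1$ between the order of the zero of $\zeta_{\mathbb{F}}$ at $s=0$ and the combined pole order $r_1+r_2$ of the Gamma factors that forces $\Omega_{\mathbb{F}}$ to have a \emph{simple} pole there, so that $\lim_{s\to 0}s\,\Omega_{\mathbb{F}}(s)$ exists and is finite. Everything else is bookkeeping inherited from Theorem \ref{BM1}. The one subtlety to watch is that the $x$-dependence in $s\,\Omega_{\mathbb{F}}(s)x^{-s/2}$ survives only away from the limit and drops out as $s\to 0$, which is what confirms that $R_0(x)$ is genuinely the constant $2^{r_1}C_{\mathbb{F}}$ rather than a function of $x$.
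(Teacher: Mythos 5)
Your proposal is correct and takes essentially the same route as the paper: specialize Theorem \ref{BM1} at $k=1$ and evaluate $R_0(x)=\lim_{s\to 0}s\,\Omega_{\mathbb{F}}(s)\,x^{-s/2}$ by balancing the pole $\Gamma^{r_1}\left(\frac{s}{2}\right)\Gamma^{r_2}(s)\sim 2^{r_1}s^{-(r_1+r_2)}$ against the zero $\zeta_{\mathbb{F}}(s)\sim C_{\mathbb{F}}s^{r_1+r_2-1}$ from \eqref{Laurent series_at s=0_1st coeff}, yielding the constant $2^{r_1}C_{\mathbb{F}}$ exactly as in the paper's computation \eqref{Residue}.
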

\begin{remark}
We note that $W_{\mathbb{F}, 1}(x)$ serves as a number field analogue of the Jacobi theta function $W_1(x)$, defined in \eqref{Jacobi theta function}, as it coincides with $W_1(x)$ when $\mathbb{F} = \mathbb{Q}$. For number fields of degree $d \leq 2$, the theta relation \eqref{Theta reln for dedekind zeta} has also been studied by Arai, Chakraborty and Kanemitsu \cite[Equation~(2.11)]{ACK2015}.
\end{remark}
\begin{remark}
For any number field $\mathbb{F}$ of degree $d \geq 3$, letting $x= -1$ in \eqref{Theta reln for dedekind zeta}, we derive the following exact evaluation:
\begin{align*}
\sum_{n=1}^{\infty}\mathtt{a}_{\mathbb{F}}(n)\tilde{Z}_{r_1, r_2}\left(\frac{2^{r_2}\pi^{d/2} ni}{\sqrt{D}} \right) &= 2^{r_1}C_{\mathbb{F}}.
\end{align*}
\end{remark}

We next present an equivalence for the functional equation of the Dedekind zeta function over totally real number fields.
\begin{corollary}\label{Theta relation for real zeta power k}
Let $\mathbb{F}$ be a totally real number field with degree $d = r_1$. The functional equation for $\zeta^k_{\mathbb{F}}(s)$ 
is equivalent to the theta relation 
\begin{align*}
W_{\mathbb{F}, k}\left(\frac{1}{x}\right) = \sqrt{x}W_{\mathbb{F}, k}(x), \quad \text{for} \,\,\, |\Arg(x)| < \frac{\pi r_1}{2},
\end{align*}
where $W_{\mathbb{F}, k}(x)$ is given by 
$$
W_{\mathbb{F}, k}(x) = 2\sum_{n=1}^{\infty}\mathtt{a}_{\mathbb{F}, k}(n)V\left(\frac{\pi^{kr_1} n^2 x}{D^k} \bigg| \bar{0}_{kr_1} \right) - R_0(x),
$$ 
and 
$$
R_0(x) = \frac{1}{(k-1)!}\lim_{s \rightarrow 0}\frac{\mathrm{d}^{k-1}}{\mathrm{d}s^{k-1}}\left(s^k \Gamma^{kr_1}\left(\frac{s}{2}\right)\zeta_\mathbb{F}^k(s)\left(\frac{\pi^{(kr_{1}/2)} \sqrt{x}}{\sqrt{D^k}} \right)^{-s}\right).
$$
\end{corollary}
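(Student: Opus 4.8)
The plan is to derive this corollary as the specialization of Theorem~\ref{BM1} to a totally real field, where $r_2 = 0$ and therefore $d = r_1$. Setting $r_2 = 0$ in \eqref{W_F, k(x)} makes $2^{kr_2} = 1$ and $\pi^{kd/2} = \pi^{kr_1/2}$, so the kernel appearing in $W_{\mathbb{F}, k}(x)$ becomes $\tilde{Z}_{kr_1, 0}$ evaluated at $\pi^{kr_1/2} n \sqrt{x}/\sqrt{D^k}$. First I would invoke the reduction \eqref{Z value real}, which in the form $\tilde{Z}_{kr_1, 0}(y) = 2V(y^2 | \bar{0}_{kr_1})$ converts this abstract kernel into a genuine Steen function. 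Squaring the argument gives
$$\tilde{Z}_{kr_1, 0}\!\left(\frac{\pi^{kr_1/2} n \sqrt{x}}{\sqrt{D^k}}\right) = 2\,V\!\left(\frac{\pi^{kr_1} n^2 x}{D^k} \,\bigg|\, \bar{0}_{kr_1}\right),$$
and summing against $\mathtt{a}_{\mathbb{F}, k}(n)$ reproduces the leading sum in the claimed expression for $W_{\mathbb{F}, k}(x)$.

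Next I would track the residual term. Substituting $r_2 = 0$ into \eqref{Omega_F of fnal eqn} yields $\Omega_{\mathbb{F}}(s) = (D/\pi^{r_1})^{s/2}\Gamma^{r_1}(s/2)\zeta_{\mathbb{F}}(s)$, whence
$$\Omega_{\mathbb{F}}^k(s)\,x^{-s/2} = \Gamma^{kr_1}\!\left(\frac{s}{2}\right)\zeta_{\mathbb{F}}^k(s)\left(\frac{\pi^{kr_1/2}\sqrt{x}}{\sqrt{D^k}}\right)^{-s},$$
since $(D^k/\pi^{kr_1})^{s/2} x^{-s/2}$ and $(\pi^{kr_1/2}\sqrt{x}/\sqrt{D^k})^{-s}$ are literally the same power. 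Feeding this into the formula for $R_0(x)$ from Theorem~\ref{BM1} produces verbatim the $R_0(x)$ stated in the corollary. The admissibility range $|\Arg(x)| < \pi d/2$ of Theorem~\ref{BM1} becomes $|\Arg(x)| < \pi r_1 /2$; one checks that this sits inside $|\Arg(x)| < \pi k r_1 /2$, the region on which $V(\,\cdot\, | \bar{0}_{kr_1})$ is defined, so the passage through \eqref{Z value real} is legitimate on the whole stated domain.

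I do not expect a substantive obstacle here: once the kernel is rewritten via \eqref{Z value real}, the remainder is bookkeeping in the exponents of $\pi$, $D$ and $x$. The only place demanding a little care is confirming that the factor pulled inside the argument of $V$ by the squaring step agrees with the $(\pi^{kr_1/2}\sqrt{x}/\sqrt{D^k})^{-s}$ factor carried by $R_0(x)$, i.e.\ that both descend from the common normalizing constant $(D^k/\pi^{kr_1})^{s/2}$ of $\Omega_{\mathbb{F}}^k(s)$; this reduces to a one-line verification of matching powers.
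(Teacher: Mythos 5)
Your proposal is correct and follows exactly the paper's own proof: specialize Theorem \ref{BM1} to $r_2 = 0$, $d = r_1$, convert the kernel via \eqref{Z value real} so that $\tilde{Z}_{kr_1,0}$ becomes $2V(\cdot\,|\,\bar{0}_{kr_1})$ with squared argument, and simplify $R_0(x)$ by absorbing the constant $(D^k/\pi^{kr_1})^{s/2}$ into the power $\left(\pi^{kr_1/2}\sqrt{x}/\sqrt{D^k}\right)^{-s}$. Your extra check that the domain $|\Arg(x)| < \pi r_1/2$ sits inside the region where $V(\cdot\,|\,\bar{0}_{kr_1})$ is defined is a small point the paper leaves implicit, but the argument is otherwise identical.
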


%

Now assuming $\mathbb{F}$ to be a real quadratic field,  we obtain the following result.
\begin{corollary}\label{Theta relation for real quadratic field}
Let $\mathbb{F} = \mathbb{Q}(\sqrt{m})$, where $m$ is a positive square free integer. The functional equation for $\zeta_{\mathbb{F}}(s)$ 
is equivalent to the theta relation 
\begin{align*}
W_{\mathbb{F}, 1}\left(\frac{1}{x}\right) = \sqrt{x}W_{\mathbb{F}, 1}(x), \quad \text{for} \,\,\, |\Arg(x)| < \pi,
\end{align*}
where $W_{\mathbb{F}, 1}(x)$ is given by $$W_{\mathbb{F}, 1}(x) = 4\sum_{n=1}^{\infty}\mathtt{a}_{\mathbb{F}}(n)K_0\left(\frac{2\pi n \sqrt{x}}{\sqrt{D}}\right) - 4C_{\mathbb{F}}.$$
\end{corollary}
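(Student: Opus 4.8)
The plan is to derive this corollary as a direct specialization of Corollary~\ref{Jacobi theta for dedekind zeta} to the totally real quadratic setting. Since $m>0$ is squarefree, the field $\mathbb{F}=\mathbb{Q}(\sqrt{m})$ is totally real of degree $d=2$, with $r_1=2$ real embeddings and $r_2=0$ pairs of complex conjugate embeddings. First I would insert these values into the expression \eqref{W_F,1(x)} for $W_{\mathbb{F},1}(x)$. Here $2^{r_2}=1$ and $\pi^{d/2}=\pi$, so the argument of the Steen-type function simplifies to $\pi n\sqrt{x}/\sqrt{D}$, while the residue constant $2^{r_1}C_{\mathbb{F}}$ collapses to $4C_{\mathbb{F}}$. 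The domain condition $|\Arg(x)|<\pi d/2$ becomes exactly $|\Arg(x)|<\pi$, matching the stated range and the quadratic case recorded in the remark following Theorem~\ref{BM1}.

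The only genuine computation is to rewrite $\tilde{Z}_{2,0}$ as a modified Bessel function. I would invoke the special value \eqref{Z value real}, which gives $\tilde{Z}_{2,0}(y)=2V(y^2\mid\bar{0}_{2})$, and then apply the evaluation \eqref{Bessel fn formula} with $a=b=0$, yielding $V(y^2\mid 0,0)=2K_0(2\sqrt{y^2})=2K_0(2y)$ on the principal branch. Chaining these gives $\tilde{Z}_{2,0}(y)=4K_0(2y)$, and setting $y=\pi n\sqrt{x}/\sqrt{D}$ produces the summand $4K_0\!\left(2\pi n\sqrt{x}/\sqrt{D}\right)$. Combined with the residue term, this recovers
\begin{equation*}
W_{\mathbb{F},1}(x)=4\sum_{n=1}^{\infty}\mathtt{a}_{\mathbb{F}}(n)K_0\!\left(\frac{2\pi n\sqrt{x}}{\sqrt{D}}\right)-4C_{\mathbb{F}},
\end{equation*}
which is exactly the claimed form.

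Since Corollary~\ref{Jacobi theta for dedekind zeta} already establishes the equivalence between the theta relation and the functional equation of $\zeta_{\mathbb{F}}(s)$, the asserted equivalence follows at once once the two expressions for $W_{\mathbb{F},1}$ are identified. Equivalently, one may specialize Corollary~\ref{Theta relation for real zeta power k} at $k=1$, $r_1=2$, which leads to the same reduction through $V(y\mid 0,0)=2K_0(2\sqrt{y})$. I do not expect a serious obstacle here: the argument is essentially bookkeeping of the powers of $2$ and $\pi$, together with the correct choice of branch so that $\sqrt{y^2}=y$ throughout the relevant sector. The one point deserving care is verifying that this branch choice is valid across the entire domain $|\Arg(x)|<\pi$, which holds because $|\Arg(x)|<\pi$ forces $\Re(\sqrt{x})>0$ and hence $\Re(y)>0$.
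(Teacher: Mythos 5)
Your proposal is correct and is essentially the paper's own proof: the paper specializes Corollary~\ref{Theta relation for real zeta power k} at $k=1$, $r_1=2$ (the alternative route you explicitly mention), evaluates $V(z\,|\,0,0)=2K_0(2\sqrt{z})$ via \eqref{Bessel fn formula}, and uses the residue computation $R_0(x)=2^{r_1}C_{\mathbb{F}}=4C_{\mathbb{F}}$ from \eqref{Residue}, exactly as you do. Your primary route through Corollary~\ref{Jacobi theta for dedekind zeta} with \eqref{Z value real} is the same specialization of Theorem~\ref{BM1} up to bookkeeping, and your check that $|\Arg(x)|<\pi$ keeps $\sqrt{y^2}=y$ on the principal branch is a harmless extra precaution the paper leaves implicit.
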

We obtain Ferrar's identity \cite[p.~102]{Ferrar1936} by taking $\mathbb{F} = \mathbb{Q}$ in Corollary \ref{Theta relation for real zeta power k}.

\begin{corollary}\label{Equivalence for zeta power k}
The functional equation for $\zeta^k(s)$ is equivalent
to the theta relation 
\begin{align}\label{Theta reln for zeta power k}
W_{\mathbb{Q}, k}\left(\frac{1}{x}\right) = \sqrt{x}W_{\mathbb{Q}, k}(x), \quad \text{for} \,\,\, |\Arg(x)| < \frac{\pi}{2},
\end{align}
where $W_{\mathbb{Q}, k}(x)$ is given by 
$$
W_{\mathbb{Q}, k}(x) := 2\sum_{n=1}^{\infty}d_k(n)V\left(\pi^k n^2 x | \bar{0}_k \right) - R_0(x),
$$ 
and 
$$
R_0(x) = \frac{1}{(k-1)!}\lim_{s \rightarrow 0}\frac{\mathrm{d}^{k-1}}{\mathrm{d}s^{k-1}}\left(s^k \Gamma^k\left(\frac{s}{2}\right)\zeta^k(s)\left(\pi^k x\right)^{-\frac{s}{2}}\right).
$$
\end{corollary}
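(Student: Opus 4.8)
The plan is to derive Corollary \ref{Equivalence for zeta power k} as the special case $\mathbb{F} = \mathbb{Q}$ of Corollary \ref{Theta relation for real zeta power k}, so that no new analytic argument is needed; the entire task reduces to checking that each arithmetic and analytic datum attached to $\mathbb{F}$ collapses to the classical objects associated with $\zeta(s)$. First I would record that $\mathbb{Q}$ is a totally real field of degree $d = r_1 = 1$ (so $r_2 = 0$) whose discriminant is $D = 1$; in particular $\mathbb{Q}$ satisfies the hypothesis of Corollary \ref{Theta relation for real zeta power k}. Since every positive integer $n$ is the norm of exactly one integral ideal $(n) \subset \mathbb{Z}$, we have $\mathtt{a}_{\mathbb{Q}}(n) = 1$ for all $n$, whence $\zeta_{\mathbb{Q}}(s) = \zeta(s)$ and, by the definition of the Dirichlet convolution in \eqref{Coeff a and d_k}, $\mathtt{a}_{\mathbb{Q}, k}(n) = (1 * \cdots * 1)(n) = d_k(n)$ and $\zeta_{\mathbb{Q}}^k(s) = \zeta^k(s)$. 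Consequently the functional equation for $\zeta_{\mathbb{Q}}^k(s)$ appearing in Corollary \ref{Theta relation for real zeta power k} is precisely the functional equation for $\zeta^k(s)$, namely the $k$-th power of \eqref{Functional eqn of zeta}.

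Next I would substitute $r_1 = 1$, $D = 1$, and $\mathtt{a}_{\mathbb{Q}, k}(n) = d_k(n)$ into the expression for $W_{\mathbb{F}, k}(x)$ given in Corollary \ref{Theta relation for real zeta power k}. The Steen-function argument simplifies as $\frac{\pi^{kr_1} n^2 x}{D^k} = \pi^k n^2 x$ and the parameter tuple $\bar{0}_{kr_1}$ becomes $\bar{0}_k$, giving
$$
W_{\mathbb{Q}, k}(x) = 2\sum_{n=1}^{\infty} d_k(n)\, V\!\left(\pi^k n^2 x \mid \bar{0}_k\right) - R_0(x),
$$
which is exactly the expression asserted in \eqref{Theta reln for zeta power k}. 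For the polar term I would note that, with $r_1 = 1$ and $D = 1$, the prefactor occurring inside $R_0(x)$ in Corollary \ref{Theta relation for real zeta power k} satisfies
$$
\left(\frac{\pi^{kr_1/2}\sqrt{x}}{\sqrt{D^k}}\right)^{-s} = \pi^{-ks/2}\, x^{-s/2} = \left(\pi^k x\right)^{-s/2},
$$
so that its residue reduces verbatim to
$$
R_0(x) = \frac{1}{(k-1)!}\lim_{s \to 0}\frac{\mathrm{d}^{k-1}}{\mathrm{d}s^{k-1}}\left(s^k \Gamma^k\left(\frac{s}{2}\right)\zeta^k(s)\left(\pi^k x\right)^{-s/2}\right),
$$
matching the stated formula.

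Finally, the admissible range $|\Arg(x)| < \pi r_1/2$ of Corollary \ref{Theta relation for real zeta power k} becomes $|\Arg(x)| < \pi/2$, as claimed, and the two-sided equivalence of the theta relation with the functional equation (including the recovery of the functional equation from the theta relation) is inherited directly from Corollary \ref{Theta relation for real zeta power k}. There is no genuine obstacle in this argument; the only points deserving care are the bookkeeping identity $\mathtt{a}_{\mathbb{Q}, k}(n) = d_k(n)$, the simplification of the $R_0(x)$ prefactor above, and the observation that the order-$k$ pole at $s = 0$ which produces $R_0(x)$ arises from the factor $\Gamma^k\!\left(\tfrac{s}{2}\right)$ (since $\zeta^k(0) = (-1/2)^k \neq 0$), exactly as encoded in the residue. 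I would close by remarking, as the surrounding text does, that this special case recovers Ferrar's identity \cite[p.~102]{Ferrar1936}.
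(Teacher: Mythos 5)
Your proposal is correct and follows exactly the paper's route: the paper proves Corollary \ref{Equivalence for zeta power k} by the one-line specialization $\mathbb{F} = \mathbb{Q}$ in Corollary \ref{Theta relation for real zeta power k}, and your write-up simply makes explicit the same bookkeeping ($r_1 = 1$, $r_2 = 0$, $D = 1$, $\mathtt{a}_{\mathbb{Q},k}(n) = d_k(n)$, and the simplification of the prefactor in $R_0(x)$). Your added observation that the order-$k$ pole at $s=0$ comes entirely from $\Gamma^k\left(\frac{s}{2}\right)$ since $\zeta(0) = -\frac{1}{2} \neq 0$ is consistent with the pole-counting in the proof of Theorem \ref{BM1}.
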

\begin{remark}
Ferrar \cite[p.~102]{Ferrar1936} derived the identity \eqref{Theta reln for zeta power k} by assuming the functional equation for $\zeta^k(s)$. However, he did not prove the converse, that is, the relation \eqref{Theta reln for zeta power k} implies the functional equation for $\zeta^k(s)$.
\end{remark}

\begin{remark}
Putting $k = 1$ and $k = 2$ in Corollary \ref{Equivalence for zeta power k} gives the equivalences for functional equation of $\zeta(s)$ and $\zeta^2(s)$ with the Jacobi theta relation \eqref{Jacobi theta transformation} and the Ramanujan-Koshliakov fomula \eqref{Koshliakov Formula}, respectively.
\end{remark}

\subsection{Equivalence of the functional equation of $\zeta_\mathbb{F}^{-k}(s)$}
In this subsection, we state an equivalence of the functional equation for $\zeta_\mathbb{F}^{-k}(s)$ with $k \in \mathbb{N}$. It is well-known that $\zeta_\mathbb{F}(s) \ne 0$ for $\Re(s) > 1$. So, we define a generalization of the M\"obius function $\mu(n)$ for any number field $\mathbb{F}$ using the following Dirichlet series:
\begin{align}\label{Gen fun for 1/Dzeta}
\frac{1}{\zeta_\mathbb{F}^k(s)} = \sum_{n=1}^{\infty}\frac{\mu_{\mathbb{F}, k}(n)}{n^s}, \quad \Re(s) > 1.
\end{align}
A detailed explanation for $\mu_{\mathbb{F}, k}(n)$ is given in \eqref{Defn of Mu_F}.
\begin{theorem}\label{BM3}
Let $k$ be any positive integer, $r = r_1 + r_2 -1$, and $\Lambda_{\mathbb{F}}(s)$, $Z_{r_1, r_2}(x)$ be defined as in \eqref{Functional eqn for dedekind zeta power -k}, \eqref{Defn of Z without tilde}, respectively. Then the functional equation for $\zeta_\mathbb{F}^{-k}(s)$ 
is equivalent to the theta relation
\begin{align}\label{Theta reln for dedekind zeta power -k}
U_{\mathbb{F}, -k}\left(\frac{1}{x}\right) = \sqrt{x}U_{\mathbb{F}, -k}(x), \quad \text{for} \,\,\, |\Arg(x)| < \frac{\pi d}{2},
\end{align}
where the term $U_{\mathbb{F}, -k}(x)$ is given by
\begin{align}\label{U_F, k(x)}
U_{\mathbb{F}, -k}(x) &:= \sum_{n=1}^{\infty}\frac{\mu_{\mathbb{F}, k}(n)}{n} {Z}_{kr_1, kr_2}\left(\frac{2^{kr_2}\pi^{kd/2} \sqrt{x}}{n\sqrt{D^k}} \right) 
 + R_0(x) + \frac{1}{2}\sum_{\rho} R_{\rho}(x),
\end{align}
and the terms $R_0(x)$ and $R_\rho(x)$ are defined as
\begin{align*}
R_0(x) &= \frac{1}{(kr-1)!}\lim_{s \rightarrow 0}\frac{{\rm d}^{kr-1}}{{\rm d}s^{kr-1}}\left(s^{kr} \Lambda_{\mathbb{F}}^{k}(s)( \sqrt{x})^{-s}\right),\\
R_\rho(x) &= \frac{1}{(k-1)!}\lim_{s \rightarrow \rho}\frac{{\rm d}^{k-1}}{{\rm d}s^{k-1}}\left((s-\rho)^k \Lambda_{\mathbb{F}}^{k}(s)(\sqrt{x})^{-s}\right).
\end{align*}
Here, we assume that, for any $x>0$, the infinite series $\sum_{\rho} R_{\rho}(x)$ converges, where the sum runs over all the non-trivial zeros of $\zeta_{\mathbb{F}}(s)$ in the strip $0 < \Re(s) < 1$. Additionally, we suppose that each such zero is simple. 
\end{theorem}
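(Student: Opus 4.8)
The plan is to establish the equivalence in Theorem~\ref{BM3} by a Mellin-transform argument that mirrors the proof of Theorem~\ref{BM1}, but now tracking the contributions of the non-trivial zeros $\rho$ of $\zeta_{\mathbb{F}}(s)$, which become poles of $\Lambda_{\mathbb{F}}^k(s)$ after inverting $\zeta_{\mathbb{F}}^k$. First I would start from the Dirichlet series definition \eqref{Gen fun for 1/Dzeta} and write, for $\Re(s)$ large enough, a Mellin--Barnes integral representation of $U_{\mathbb{F},-k}(x)$. Specifically, I would consider the vertical contour integral $\frac{1}{2\pi i}\int_{(b)} \Lambda_{\mathbb{F}}^k(s)(\sqrt{x})^{-s}\,\mathrm{d}s$ on a line $-1 < b < 0$; using the series $\zeta_{\mathbb{F}}^{-k}(s) = \sum_{n\geq 1}\mu_{\mathbb{F},k}(n)n^{-s}$ and the defining integral \eqref{Defn of Z without tilde} of $Z_{kr_1,kr_2}$, term-by-term integration should identify this contour integral with the series $\sum_{n\geq 1}\tfrac{\mu_{\mathbb{F},k}(n)}{n}Z_{kr_1,kr_2}\bigl(\tfrac{2^{kr_2}\pi^{kd/2}\sqrt{x}}{n\sqrt{D^k}}\bigr)$, after absorbing the powers of $D$, $\pi$, and $2$ into the argument.

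Next I would prove the theta relation by residue calculus. The key observation is that the functional equation \eqref{Functional eqn for dedekind zeta power -k}, namely $\Lambda_{\mathbb{F}}(s)=\Lambda_{\mathbb{F}}(1-s)$, forces $\Lambda_{\mathbb{F}}^k(s)(\sqrt{x})^{-s}$ to transform into $\Lambda_{\mathbb{F}}^k(1-s)(\sqrt{x})^{-s}$; substituting $s\mapsto 1-s$ and changing $x\mapsto 1/x$ in the contour integral should produce the factor $\sqrt{x}$ together with the same integral evaluated at $1/x$. Concretely, I would shift the contour from $(b)$ to the symmetric line $(1-b)$ across the critical strip, collecting residues at $s=0$ (a pole of order $kr$ coming from $\zeta_{\mathbb{F}}^{-k}$ having a zero of order $kr$ there, reflected via the functional equation) and at each non-trivial zero $\rho$ of $\zeta_{\mathbb{F}}(s)$ (a pole of order $k$ under the simplicity assumption, since $\zeta_{\mathbb{F}}^{-k}$ has a pole of order $k$ where $\zeta_{\mathbb{F}}$ vanishes simply). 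These residues are precisely the announced terms $R_0(x)$ and $\tfrac{1}{2}\sum_\rho R_\rho(x)$, where the factor $\tfrac12$ arises because the zeros come in symmetric pairs $\rho,1-\rho$ and each is counted once after symmetrization. Reassembling the shifted integral as $\sqrt{x}\,U_{\mathbb{F},-k}(x)$ minus the collected residues yields \eqref{Theta reln for dedekind zeta power -k}.

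For the converse direction, I would assume the theta relation \eqref{Theta reln for dedekind zeta power -k} holds and recover the functional equation by taking Mellin transforms of both sides. Multiplying by $(\sqrt{x})^{s-1}$ and integrating in $x$ over $(0,\infty)$, the left side $U_{\mathbb{F},-k}(1/x)$ produces $\Lambda_{\mathbb{F}}^k(1-s)$ up to elementary factors, while the right side $\sqrt{x}\,U_{\mathbb{F},-k}(x)$ produces $\Lambda_{\mathbb{F}}^k(s)$; equating them gives \eqref{Functional eqn for dedekind zeta power -k}, hence the functional equation for $\zeta_{\mathbb{F}}^{-k}(s)$. Care is needed to justify that the residual terms $R_0$ and $R_\rho$ contribute consistently under the Mellin transform, but since these are polynomial-times-power functions their transforms are explicit and can be matched against the poles of $\Lambda_{\mathbb{F}}^k$.

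The main obstacle I anticipate is the rigorous justification of the term-by-term interchange of summation and integration, together with the convergence of the residue sum $\sum_\rho R_\rho(x)$, which the statement explicitly poses as a hypothesis rather than a conclusion. Controlling $\Lambda_{\mathbb{F}}^k(s)(\sqrt{x})^{-s}$ along horizontal segments as one shifts the contour requires growth estimates on $\zeta_{\mathbb{F}}^{-k}(s)$ in the critical strip, which are delicate precisely because $\zeta_{\mathbb{F}}$ has zeros there; a Phragm\'en--Lindel\"of type bound combined with the assumed simplicity of zeros will be needed to show the horizontal contributions vanish and that the series over $\rho$ can be grouped by the symmetry $\rho\leftrightarrow 1-\rho$. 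Establishing this analytic control, rather than the formal residue bookkeeping, is where the real work lies.
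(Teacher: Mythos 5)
Your overall architecture coincides with the paper's: represent $L_{\mathbb{F},-k}(x):=\sum_{n\ge 1}\frac{\mu_{\mathbb{F},k}(n)}{n}Z_{kr_1,kr_2}\bigl(\frac{2^{kr_2}\pi^{kd/2}\sqrt{x}}{n\sqrt{D^k}}\bigr)$ as $\frac{1}{2\pi i}\int_{(b)}\Lambda_{\mathbb{F}}^{k}(s)\,x^{-s/2}\,\mathrm{d}s$ with $-1<b<0$, shift the contour across the critical strip, and prove the converse by Mellin-transforming and checking symmetry under $s\leftrightarrow 1-s$. But your residue bookkeeping in the forward direction has a genuine gap. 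You omit the pole of order $kr$ at $s=1$: there $\zeta_{\mathbb{F}}^{-k}(1-s)$ blows up because $\zeta_{\mathbb{F}}$ vanishes to order $r=r_1+r_2-1$ at $s=0$. Its residue $R_1(x)$ satisfies $R_1(x)=-x^{-1/2}R_0(1/x)$ (via the functional equation), and it is exactly this identity that lets the $R_0$ term be absorbed into $U_{\mathbb{F},-k}$ so that the relation closes into the homogeneous form $U_{\mathbb{F},-k}(1/x)=\sqrt{x}\,U_{\mathbb{F},-k}(x)$; without $R_1$ your transformation formula cannot be rearranged into the stated shape.

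Your account of the factor $\tfrac12$ is also incorrect. The contour shift picks up the residue at \emph{every} nontrivial zero once, in full: the pole set of $\zeta_{\mathbb{F}}^{-k}(1-s)$ inside the strip is the entire zero set (which is invariant under $\rho\mapsto 1-\rho$), so no pairing halves anything, and the shift yields $L_{\mathbb{F},-k}(x)+R_0(x)=x^{-1/2}\bigl[L_{\mathbb{F},-k}(1/x)+R_0(1/x)\bigr]-\sum_\rho R_\rho(x)$ with the whole residue sum on one side. The $\tfrac12$ emerges only from a further symmetrization step, which is the idea missing from your sketch: write the same identity with $x$ replaced by $1/x$, add the two to deduce the anti-symmetry $x^{1/4}\sum_\rho R_\rho(x)=-x^{-1/4}\sum_\rho R_\rho(1/x)$, and then split the residue sum evenly between the two sides; this rearrangement is also precisely where the hypothesis that $\sum_\rho R_\rho(x)$ converges is invoked. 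Your converse direction is essentially the paper's (split the Mellin integral of $L_{\mathbb{F},-k}$ at $x=1$, apply the theta relation on one piece, and verify that the explicit transforms of $R_0$, a degree-$(kr-1)$ polynomial in $\log x$, and of $R_\rho(x)=x^{-\rho/2}$ times a degree-$(k-1)$ polynomial in $\log x$, combine into terms of the form $\frac{1}{s^{j}}+\frac{1}{(1-s)^{j}}$ and $\frac{1}{(s-\rho)^{j}}+\frac{1}{(1-s-\rho)^{j}}$, manifestly symmetric under $s\leftrightarrow 1-s$), so once the two defects above are repaired your plan matches the paper's proof.
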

\begin{remark}
Setting $k=1, \alpha = \frac{2^{r_2}\pi^{d/2}\sqrt{x}}{\sqrt{D}}$ and $\beta = \frac{2^{r_2}\pi^{d/2}}{\sqrt{D}\sqrt{x}}$ in Theorem \ref{BM3} yields that the functional equation of $\zeta_\mathbb{F}^{-1}(s)$ is equivalent to the following identity of Dixit, Gupta, and Vatwani \cite[Theorem~1.3]{DGV2022},
\begin{align}\label{Id of Dixit etal}
&\sqrt{\alpha}\sum_{n=1}^{\infty}\frac{\mu_{\mathbb{F}, 1}(n)}{n}{Z}_{r_1, r_2}\left(\frac{\alpha}{n} \right) - \sqrt{\beta}\sum_{n=1}^{\infty}\frac{\mu_{\mathbb{F}, 1}(n)}{n}{Z}_{r_1, r_2}\left(\frac{\beta}{n} \right) \nonumber \\
&= \frac{1}{\sqrt{\alpha}}R_0\left(\alpha\right) - \frac{1}{\sqrt{\beta}}R_0\left(\beta\right) + \frac{1}{2}\left[\frac{1}{\sqrt{\alpha}}\sum_\rho R_\rho\left(\alpha\right) - \frac{1}{\sqrt{\beta}}\sum_\rho R_\rho\left(\beta\right)\right],
\end{align}
where the terms $R_0(\alpha)$ and $R_\rho(\alpha)$ are given by
\begin{align*}
R_0(\alpha) &= \frac{1}{(r-1)!}\lim_{s \rightarrow 0}\frac{{\rm d}^{r-1}}{{\rm d}s^{r-1}}\left(s^r \alpha^s \frac{\Gamma^{r_1}\left(\frac{1-s}{2}\right)\Gamma^{r_2}\left(1-s\right)}{\zeta_\mathbb{F}(s)} \right), \\
R_\rho\left(\alpha\right) &= \alpha^\rho \frac{\Gamma^{r_1}\left(\frac{1-\rho}{2}\right)\Gamma^{r_2}\left(1-\rho\right)}{\zeta_\mathbb{F}'(\rho)}.
\end{align*}
This also suggests that the theta relation \eqref{Theta reln for dedekind zeta power -k} is a one variable generalization of the identity \eqref{Id of Dixit etal} of Dixit, Gupta, and Vatwani.

\end{remark}

Considering $\mathbb{F} = \mathbb{Q}$ in Theorem \ref{BM3}, we have the following result.
\begin{corollary}\label{Theta reln for zeta power -k}
Let $k \in \mathbb{N}$ and $\mu_k(n)$ be the generalized M\"obius  function defined in \eqref{Mobius for Q}. The functional equation for $\zeta^{-k}(s)$
is equivalent to the theta relation 
\begin{align}\label{New id for zeta inv}
U_{\mathbb{Q}, -k}\left(\frac{1}{x}\right) = \sqrt{x}U_{\mathbb{Q}, -k}(x), \quad \text{for} \,\,\, |\Arg(x)| < \frac{\pi}{2},
\end{align}
where $U_{\mathbb{Q}, -k}(x)$ is given by 
\begin{align*}
U_{\mathbb{Q}, -k}(x) := \sum_{n=1}^{\infty}\frac{\mu_{k}(n)}{n}\left[2\,V\left(\frac{\pi^{k} x}{n^2} \bigg | \bar{0}_{k} \right) - \mathfrak{R}_0\left(\frac{\pi^{k/2} \sqrt{x}}{n}\right)\right] + \frac{1}{2}\sum_{\rho} R_{\rho}(x).
\end{align*} 
Here, the sum over $\rho$ runs through all the non-trivial zeros of $\zeta(s)$, and
{\allowdisplaybreaks
\begin{align*}
\mathfrak{R}_0(x) &= \frac{1}{(k-1)!}\lim_{s \rightarrow 0}\frac{\mathrm{d}^{k -1}}{\mathrm{d}s^{k-1}}\left(s^{k} \Gamma^{k}\left(\frac{s}{2}\right)x^{-s}\right), \\
R_\rho(x) &= \frac{1}{(k-1)!}\lim_{s \rightarrow \rho}\frac{{\rm d}^{k-1}}{{\rm d}s^{k-1}}\left((s-\rho)^k \pi^\frac{-ks}{2}\Gamma^k\left(\frac{s}{2}\right)\zeta^{-k}(1-s)(\sqrt{x})^{-s}\right).
\end{align*}}
\end{corollary}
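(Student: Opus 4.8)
The plan is to obtain this corollary as the specialization of Theorem~\ref{BM3} to the rational field $\mathbb{F} = \mathbb{Q}$, whose invariants are $r_1 = 1$, $r_2 = 0$, $d = 1$, $D = 1$, and hence $r = r_1 + r_2 - 1 = 0$. First I would substitute these values throughout \eqref{U_F, k(x)}. The generalized M\"obius coefficients $\mu_{\mathbb{F}, k}(n)$ of \eqref{Gen fun for 1/Dzeta} then become the coefficients $\mu_k(n)$ of \eqref{Mobius for Q}, and the argument of the inner transform simplifies, since $2^{kr_2} = 1$, $\pi^{kd/2} = \pi^{k/2}$ and $\sqrt{D^k} = 1$, to $\pi^{k/2}\sqrt{x}/n$. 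Thus the general term $Z_{kr_1, kr_2}(\cdots)$ becomes $Z_{k,0}\left(\pi^{k/2}\sqrt{x}/n\right)$, which I would rewrite using the evaluation \eqref{Z without tilde at real}, namely $Z_{k,0}(y) = 2V(y^2 \mid \bar{0}_k) - \mathfrak{R}_0(y)$. Taking $y = \pi^{k/2}\sqrt{x}/n$, so that $y^2 = \pi^k x/n^2$, reproduces precisely the bracketed expression $2V\left(\pi^k x/n^2 \mid \bar{0}_k\right) - \mathfrak{R}_0\left(\pi^{k/2}\sqrt{x}/n\right)$ appearing in $U_{\mathbb{Q}, -k}(x)$, with $\mathfrak{R}_0$ as stated.

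The only step requiring genuine checking --- and the one I expect to be the main (if modest) obstacle --- is the disappearance of the residue term $R_0(x)$ of \eqref{U_F, k(x)} in the rational case. For $\mathbb{Q}$ we have $r = 0$, so the order $kr$ of the pole assigned to $R_0(x)$ in Theorem~\ref{BM3} is zero, and the formula for $R_0(x)$ (written there for $kr \geq 1$) must be interpreted as vacuous. To justify this, I would verify directly that $\Lambda_{\mathbb{Q}}^k(s) = \pi^{-ks/2}\Gamma^k(s/2)\zeta^{-k}(1-s)$ is holomorphic and non-vanishing at $s = 0$: the pole of order $k$ of $\Gamma^k(s/2)$ at $s = 0$ is cancelled exactly by the zero of order $k$ of $\zeta^{-k}(1-s)$ there, the latter arising from the simple pole of $\zeta(s)$ at $s = 1$. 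Consequently $\Lambda_{\mathbb{Q}}^k(s)(\sqrt{x})^{-s}$ has no pole at $s = 0$, its residue is zero, and the term $R_0(x)$ is indeed absent from $U_{\mathbb{Q}, -k}(x)$.

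It remains only to record the residues $R_\rho(x)$. Substituting $\Lambda_{\mathbb{Q}}^k(s) = \pi^{-ks/2}\Gamma^k(s/2)\zeta^{-k}(1-s)$ into the general expression for $R_\rho(x)$ in Theorem~\ref{BM3} gives at once
\begin{equation*}
R_\rho(x) = \frac{1}{(k-1)!}\lim_{s \to \rho}\frac{\mathrm{d}^{k-1}}{\mathrm{d}s^{k-1}}\left((s-\rho)^k \pi^{-ks/2}\Gamma^k\left(\frac{s}{2}\right)\zeta^{-k}(1-s)(\sqrt{x})^{-s}\right),
\end{equation*}
the poles occurring at those $s = \rho$ for which $\zeta(1-\rho) = 0$; by the functional-equation symmetry $\rho \leftrightarrow 1-\rho$ of the non-trivial zeros, these are exactly the non-trivial zeros of $\zeta(s)$, so the sum runs over them as claimed. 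Assembling the three pieces yields $U_{\mathbb{Q}, -k}(x)$ in the asserted form, and since $\pi d/2 = \pi/2$ for $\mathbb{Q}$, the theta relation \eqref{New id for zeta inv} on $|\Arg(x)| < \pi/2$ together with its equivalence to the functional equation for $\zeta^{-k}(s)$ follow immediately from Theorem~\ref{BM3}.
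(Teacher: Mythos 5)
Your proposal is correct and follows essentially the same route as the paper: specializing Theorem~\ref{BM3} to $\mathbb{F}=\mathbb{Q}$ (so $\mu_{\mathbb{Q},k}(n)=\mu_k(n)$), rewriting $Z_{k,0}\left(\frac{\pi^{k/2}\sqrt{x}}{n}\right)$ via \eqref{Z without tilde at real}, and noting that $R_0(x)$ drops out since $r=r_1+r_2-1=0$. Your explicit verification that the order-$k$ pole of $\Gamma^k(s/2)$ at $s=0$ is cancelled by the order-$k$ zero of $\zeta^{-k}(1-s)$ is a welcome elaboration of the paper's one-line remark that $R_0(x)$ vanishes for $\mathbb{F}=\mathbb{Q}$, but it is the same argument.
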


\begin{remark} 
Upon simplification, substituting $k=1$ in Corollary \ref{Theta reln for zeta power -k}, one can deduce that the functional equation of $\zeta^{-1}(s)$ is equivalent to the Hardy-Littlewood-Ramanujan identity \eqref{Ramanujan identity for 1/zeta}. Thus, the identity \eqref{New id for zeta inv} can be seen as a one variable generalization of \eqref{Ramanujan identity for 1/zeta}.
\end{remark}

\begin{corollary}\label{Theta reln for zeta power -2}
Let $\mu_2(n)$ be the generalized M\"obius  function defined in \eqref{Mobius for Q}. The functional equation for $\zeta^{-2}(s)$
is equivalent to the theta relation 
\begin{align}\label{New id for zeta 2inv}
U_{\mathbb{Q}, -2}\left(\frac{1}{x}\right) = \sqrt{x}U_{\mathbb{Q}, -2}(x), \quad \text{for} \,\,\, |\Arg(x)| < \frac{\pi}{2},
\end{align}
where $U_{\mathbb{Q}, -2}(x)$ is given by 
\begin{align*}
U_{\mathbb{Q}, -2}(x) := 4\sum_{n=1}^{\infty}\frac{\mu_{2}(n)}{n}\left[\,K_0\left(\frac{2\pi \sqrt{x}}{n}\right) + \log\left(\frac{\pi \sqrt{x}}{n}\right)\right] + \frac{1}{2}\sum_{\rho} R_{\rho}(x).
\end{align*} 
Here, the sum over $\rho$ runs through all the non-trivial zeros of $\zeta(s)$, and
\begin{align*}
R_\rho(x) &= \lim_{s \rightarrow \rho}\frac{{\rm d}}{{\rm d}s}\left((s-\rho)^2 \Gamma^2\left(\frac{s}{2}\right)\zeta^{-2}(1-s)(\pi\sqrt{x})^{-s}\right).
\end{align*}
\end{corollary}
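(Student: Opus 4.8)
The plan is to obtain Corollary~\ref{Theta reln for zeta power -2} as the case $k=2$ of Corollary~\ref{Theta reln for zeta power -k}, which is itself Theorem~\ref{BM3} specialized to $\mathbb{F}=\mathbb{Q}$ (so that $r_1=1$, $r_2=0$, $d=1$, $D=1$, and $r=r_1+r_2-1=0$). Since the equivalence between the functional equation for $\zeta^{-k}(s)$ and the theta relation \eqref{New id for zeta inv} has already been established for every $k\in\mathbb{N}$, the only task is to unfold the expression for $U_{\mathbb{Q},-k}(x)$ when $k=2$ and verify that it collapses to the asserted form; the equivalence itself is then inherited.

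First I would evaluate the Steen function appearing in $U_{\mathbb{Q},-2}(x)$. Applying \eqref{Bessel fn formula} with $a=b=0$, which gives $V(y\,|\,0,0)=2K_0(2\sqrt{y})$, yields
\begin{align*}
2\,V\left(\frac{\pi^2 x}{n^2}\,\bigg|\,\bar{0}_2\right)=4\,K_0\left(\frac{2\pi\sqrt{x}}{n}\right),
\end{align*}
which produces the Bessel term with the correct factor of $4$.

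Next I would compute the residual term $\mathfrak{R}_0(y)$ for $k=2$. Using $s\,\Gamma(s/2)=2\,\Gamma(1+s/2)$ together with $\Gamma(1+s/2)=1-\tfrac{\gamma}{2}s+O(s^2)$ near $s=0$, one finds $s^2\Gamma^2(s/2)=4\bigl(1-\gamma s+O(s^2)\bigr)$; multiplying by $y^{-s}=1-s\log y+O(s^2)$ and differentiating once at $s=0$ gives $\mathfrak{R}_0(y)=-4\bigl(\gamma+\log y\bigr)$. Substituting $y=\pi\sqrt{x}/n$ and combining with the Bessel term, the bracket inside $U_{\mathbb{Q},-2}(x)$ becomes $4K_0\!\left(\tfrac{2\pi\sqrt{x}}{n}\right)+4\gamma+4\log\!\left(\tfrac{\pi\sqrt{x}}{n}\right)$. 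The constant $4\gamma$ is then discarded using $\sum_{n=1}^{\infty}\mu_2(n)/n=\zeta^{-2}(1)=0$, which holds because $\zeta(s)$ has a pole at $s=1$; what remains is exactly $4\sum_{n=1}^{\infty}\tfrac{\mu_2(n)}{n}\left[K_0\!\left(\tfrac{2\pi\sqrt{x}}{n}\right)+\log\!\left(\tfrac{\pi\sqrt{x}}{n}\right)\right]$, the stated main term.

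For the sum over the non-trivial zeros I would take the general $R_\rho(x)$ from Corollary~\ref{Theta reln for zeta power -k} at $k=2$ and merge $\pi^{-s}(\sqrt{x})^{-s}$ into $(\pi\sqrt{x})^{-s}$, recovering the $R_\rho(x)$ written in Corollary~\ref{Theta reln for zeta power -2}. The one genuinely delicate point---and the step I would justify most carefully---is the vanishing of the constant $\gamma$ contribution, since it rests on the convergence of $\sum_n\mu_2(n)/n$ to $\zeta^{-2}(1)=0$; the remaining manipulations are purely mechanical specializations.
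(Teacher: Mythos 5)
Your proposal is correct and follows essentially the same route as the paper: specialize Corollary \ref{Theta reln for zeta power -k} at $k=2$, evaluate $V\left(\frac{\pi^2 x}{n^2}\,\middle|\,\bar{0}_2\right)=2K_0\left(\frac{2\pi\sqrt{x}}{n}\right)$ via \eqref{Bessel fn formula}, compute $\mathfrak{R}_0\left(\frac{\pi\sqrt{x}}{n}\right)=-4\gamma-4\log\left(\frac{\pi\sqrt{x}}{n}\right)$ from the same Laurent expansions, and drop the $\gamma$ term using $\sum_{n=1}^{\infty}\frac{\mu_2(n)}{n}=0$, which the paper justifies by \eqref{Sum of 1/Dzeta(1)} (the Landau-type result you rightly flag as the only nontrivial input, since it needs PNT-strength convergence of the series at $s=1$, not merely the pole of $\zeta(s)$). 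The merging of $\pi^{-s}(\sqrt{x})^{-s}$ into $(\pi\sqrt{x})^{-s}$ in $R_\rho(x)$ also matches the paper exactly.
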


Now we make use of Corollary \ref{Jacobi theta for dedekind zeta} to prove an analogue of Hardy's result for the Dedekind zeta function.  
\begin{theorem}\label{BM2}
For any number field $\mathbb{F}$, the Dedekind zeta function $\zeta_{\mathbb{F}}(s)$ has infinitely many non-trivial zeros on the critical line $\Re(s) = \frac{1}{2}$.
\end{theorem}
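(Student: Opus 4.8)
The plan is to transcribe Hardy's 1914 argument to the Dedekind setting, using the theta relation of Corollary~\ref{Jacobi theta for dedekind zeta} exactly where Hardy used \eqref{Jacobi theta transformation}. First I would manufacture the completed, entire, critical-line-real avatar of $\zeta_{\mathbb F}$. Since $\Omega_{\mathbb F}(s)$ is holomorphic on $\mathbb C\setminus\{0,1\}$ with only simple poles at $s=0$ and $s=1$, the function $\xi_{\mathbb F}(s):=s(s-1)\Omega_{\mathbb F}(s)$ is entire, and \eqref{Dedekind_zeta_functional_equation} upgrades to $\xi_{\mathbb F}(s)=\xi_{\mathbb F}(1-s)$. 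Setting $\Xi_{\mathbb F}(t):=\xi_{\mathbb F}(1/2+it)$, the reality of the coefficients $\mathtt{a}_{\mathbb F}(n)$ together with the functional equation makes $\Xi_{\mathbb F}$ real-valued and even for real $t$, and on the line $s=1/2+it$ the nonvanishing factors $s(s-1)$ and $\Gamma^{r_1}(s/2)\Gamma^{r_2}(s)$ show that the real zeros of $\Xi_{\mathbb F}$ are exactly the zeros of $\zeta_{\mathbb F}$ on $\Re(s)=1/2$. Thus it suffices to prove that $\Xi_{\mathbb F}$ changes sign infinitely often. By Stirling the gamma factors force $\Xi_{\mathbb F}(t)=O\!\left(t^{A}e^{-\pi d|t|/4}\right)$ for a fixed $A$, and this decay exponent $\pi d/4$ controls the whole argument.

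Next I would turn Corollary~\ref{Jacobi theta for dedekind zeta} into an integral representation. A term-by-term Mellin transform identifies $\int_0^\infty\bigl(W_{\mathbb F,1}(x)+2^{r_1}C_{\mathbb F}\bigr)x^{s/2-1}\,dx$ with a constant multiple of $\Omega_{\mathbb F}(s)$ for $\Re(s)>1$; inverting and pushing the contour to $\Re(s)=1/2$, which meets only the simple pole of $\Omega_{\mathbb F}$ at $s=1$, and then taking $x=e^{i\phi}$ yields, after using $\Omega_{\mathbb F}(1/2+it)=-\Xi_{\mathbb F}(t)/(t^2+\tfrac14)$ and the evenness of $\Xi_{\mathbb F}$, a formula of the shape
\begin{equation*}
J(\phi):=\int_0^\infty\frac{\Xi_{\mathbb F}(t)}{t^2+\tfrac14}\cosh\!\Bigl(\frac{\phi t}{2}\Bigr)\,dt = P(\phi)-\pi\,e^{i\phi/4}\,W_{\mathbb F,1}\!\left(e^{i\phi}\right),
\end{equation*}
where $P(\phi)$ is elementary (coming from the $s=1$ residue and the constant $2^{r_1}C_{\mathbb F}$). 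The left side is manifestly real and even in $\phi$ and is analytic on the strip $|\phi|<\pi d/2$, precisely the sector on which Corollary~\ref{Jacobi theta for dedekind zeta} is valid; the theta relation $W_{\mathbb F,1}(1/x)=\sqrt{x}\,W_{\mathbb F,1}(x)$ is exactly what guarantees that the right side is real, so that the identity is consistent.

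Now I would run Hardy's contradiction. Suppose $\Xi_{\mathbb F}$ has only finitely many real zeros, so that $\Xi_{\mathbb F}(t)$ has one fixed sign for all $t\geq T$; without loss of generality $\Xi_{\mathbb F}(t)\geq0$ there. Differentiating $J$ at $\phi=0$ gives $J^{(2m)}(0)=2^{-2m}\int_0^\infty\Xi_{\mathbb F}(t)(t^2+\tfrac14)^{-1}t^{2m}\,dt$, which is positive for all large $m$ because the weight $t^{2m}$ concentrates on the region $t\geq T$ where $\Xi_{\mathbb F}\geq0$. Hence the even Taylor coefficients of $J$ at $0$ are eventually positive, so by Pringsheim's theorem $\phi=\pi d/2$ is a singular point of $J$, approached monotonically. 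The forced growth of $J(\phi)$ as $\phi\uparrow\pi d/2$ is governed by the genuine size of $\Xi_{\mathbb F}$, i.e.\ by mean values of $\zeta_{\mathbb F}(1/2+it)$ on the critical line, whereas the right-hand side $P(\phi)-\pi e^{i\phi/4}W_{\mathbb F,1}(e^{i\phi})$ can grow at the boundary ray $|\Arg(x)|=\pi d/2$ only as fast as the theta relation permits. Matching these two rates produces the contradiction, and $\Xi_{\mathbb F}$ must therefore change sign infinitely often, giving Theorem~\ref{BM2}.

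The step I expect to be the real obstacle is precisely this boundary rate comparison, and it is hardest exactly when $r_1$ is large: the polynomial factor $t^{-r_1/4}$ coming from $\Gamma^{r_1}(1/4+it/2)$ weakens the blow-up forced by positivity, and for $r_1\geq4$ the naive mean-value lower bound no longer dominates the boundary growth of the theta function. This is the same barrier that confined the earlier results of Chandrasekharan--Narasimhan and Berndt to the range $0\leq r_1\leq3$. The new leverage is that Corollary~\ref{Jacobi theta for dedekind zeta} holds on the full sector $|\Arg(x)|<\pi d/2$ --- indeed on all of $\mathbb C\setminus\{0\}$ once $d\geq3$ --- so that $W_{\mathbb F,1}(e^{i\phi})$ admits direct analytic control up to the boundary ray; extracting a sharp upper bound for its growth there, and showing it is strictly smaller than the lower bound forced by the positivity assumption for every $d$, is the crux. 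A secondary, routine difficulty is the careful bookkeeping of the residue at $s=1$ and of the constant $2^{r_1}C_{\mathbb F}$ so that the elementary term $P(\phi)$ stays regular at the boundary and does not obscure the comparison.
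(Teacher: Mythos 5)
Your construction of $\xi_{\mathbb{F}}$, $\Xi_{\mathbb{F}}$ and your integral identity match the paper exactly: after the substitution $\phi=2\alpha$, your formula for $J(\phi)$ is precisely the paper's identity \eqref{Phi with cosh}, obtained there from Lemma \ref{Lemma 1} by the same contour shift. The genuine gap is in the endgame. You drive $\phi$ to the boundary ray $\phi\uparrow\pi d/2$ of the sector of validity, invoke Pringsheim to locate a singularity there, and then rely on an unestablished ``rate comparison'' between the blow-up of $J$ forced by positivity and the boundary growth of $W_{\mathbb{F},1}$ --- a comparison you yourself concede is open, and which is exactly the obstruction that confined Chandrasekharan--Narasimhan and Berndt to $0\le r_1\le 3$. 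Pringsheim alone yields no contradiction, since both sides of the identity are genuinely singular on that ray, and no mean-value lower bound for $\zeta_{\mathbb{F}}(\tfrac12+it)$ strong enough to win the comparison for all $(r_1,r_2)$ is supplied or known. The paper never goes near the boundary. Its decisive observation is that $x=-1$ is the fixed point of $x\mapsto 1/x$, and for $d\ge 3$ the corresponding parameter $\alpha=\pi/2$ (i.e.\ $x=e^{2i\alpha}=-1$) lies \emph{strictly inside} the sector $|\alpha|<\pi d/4$ where Corollary \ref{Jacobi theta for dedekind zeta} holds: the relation $W_{\mathbb{F},1}(e^{-2i\alpha})=e^{i\alpha}W_{\mathbb{F},1}(e^{2i\alpha})$ then forces $W_{\mathbb{F},1}(e^{2i\alpha})$ and all its derivatives to vanish as $\alpha\to\pi/2$ (cf.\ the evaluation $W_{\mathbb{F},1}(-1)=0$ in the remark following Corollary \ref{Jacobi theta for dedekind zeta}), while the decay $\Xi_{\mathbb{F}}(t)=O\bigl(t^{A}e^{-\pi dt/4}\bigr)$ with $\pi d/4>\pi/2$ keeps the $2n$-fold differentiated integral \eqref{differentiated 2n times} absolutely convergent at $\alpha=\pi/2$. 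What survives in the limit is the explicit alternating moment $(-1)^{n+1}2^{r_1}\pi C_{\mathbb{F}}\cos(\pi/4)\,2^{-2n}$; under the one-sign hypothesis on $\Xi_{\mathbb{F}}$, comparing the upper bound $\ell\,T^{2n+1}$ with the lower bound $c\,\delta^{2n}T^{2n+1}$ coming from $[\delta T,(\delta+1)T]$ gives $\ell>c\,\delta^{2n}$, impossible for large $n$ since $\delta>1$. No boundary asymptotics of the theta function and no moments of $\zeta_{\mathbb{F}}$ on the critical line enter, which is precisely why the paper's proof is uniform in $r_1$ and $r_2$.

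A second omission: as written, your scheme would also have to handle $d=1,2$, where $\alpha=\pi/2$ lies outside ($d=1$) or exactly on ($d=2$) the boundary $|\alpha|=\pi d/4$, so the interior-point mechanism is unavailable and your boundary formulation is at its weakest. The paper sidesteps these cases entirely: $\mathbb{F}=\mathbb{Q}$ is Hardy's theorem, and for quadratic $\mathbb{F}$ one factors $\zeta_{\mathbb{F}}(s)=\zeta(s)L(s,\chi)$ with $L(s,\chi)$ entire, so $\zeta_{\mathbb{F}}$ inherits the critical-line zeros of $\zeta$; the theta argument is run only for $d\ge 3$. In short, your scaffolding (Lemma \ref{Xi as even and real fn}, the $\cosh$-integral identity) is right, but the missing idea is the evaluation at the interior fixed point $x=-1$, where the theta relation annihilates the theta term to all orders and reduces everything to Hardy's elementary moment-growth contradiction; the Pringsheim-plus-boundary-rate route you substitute for it does not close, and you say so yourself.
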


\section{{\bf Key Tools}}\label{Key Tools}
In this section, we mention some results and lemmas that will be used in proving our main theorems. 
The Dedekind zeta function $\zeta_{\mathbb{F}}(s)$ has Euler product expansion, given by
$$ \zeta_\mathbb{F}(s) = \prod_{\mathfrak{p} \subset \mathcal{O}_\mathbb{F}}\left(1 - \frac{1}{\mathfrak{N}(\mathfrak{p})^s}\right)^{-1}, \quad \Re(s) > 1,$$
where $\mathfrak{p}$ runs over prime ideals of $\mathcal{O}_{\mathbb{F}}$. From \eqref{Gen fun for 1/Dzeta}, we know that the Dirichlet series associated to $\mu_{\mathbb{F}, k}(n)$ is $\zeta_\mathbb{F}^{-k}(s)$. Thus, using the above Euler product, one can see that 
\begin{align}\label{Defn of Mu_F}
\mu_{\mathbb{F}, k}(n) = \begin{cases} 
       \sum\limits_{\mathfrak{a} \in A}(-1)^{\alpha_1 + \alpha_2 + \cdots + \alpha_m}, & 1 \leq \alpha_i \leq k, \\
      1, & n = 1, \\
      0, & \text{otherwise}.
   \end{cases}
\end{align}
where the set $A$ is defined as 
$$ A = \{ \mathfrak{a} \in \mathcal{O}_\mathbb{F}| \mathfrak{N}(\mathfrak{a}) = n, \mathfrak{a} = \mathfrak{p}_1^{\alpha_1}\mathfrak{p}_2^{\alpha_2}\cdots\mathfrak{p}_m^{\alpha_m}, \text{where } \mathfrak{p}_i\text{'s} \text{ are distinct prime ideals}\}.
$$
For $k = 1$, the above function $\mu_{\mathbb{F}, k}(n)$ coincides with the function given by Heath-Brown \cite[p.~172]{Brown77}.
In particular, when $\mathbb{F} = \mathbb{Q}$, then
\begin{align}\label{Mobius for Q}
\mu_{\mathbb{Q}, k}(n) = \mu_{k}(n) = \begin{cases} 
       (-1)^{\alpha_1 + \alpha_2 + \cdots + \alpha_m}, & 1 \leq \alpha_i \leq k, \\
      1, & n = 1, \\
      0, & \text{otherwise},
   \end{cases}
\end{align}
where $n = p_1^{\alpha_1}p_2^{\alpha_2}\cdots p_m^{\alpha_m}$ and $\mu_{1}(n) = \mu(n)$, with $\mu(n)$ being the well-known M\"obius function. Moreover, Landau \cite[p.~172]{Landau1903} showed that
\begin{align*}
\lim_{s \rightarrow 1^+}\frac{1}{\zeta_\mathbb{F}(s)} = \lim_{s \rightarrow 1^+}\sum_{n=1}^{\infty}\frac{\mu_{\mathbb{F}}(n)}{n^s} = \sum_{n=1}^{\infty}\frac{\mu_{\mathbb{F}}(n)}{n} = 0.
\end{align*}
More generally, for $k \geq 1$, one can see that
\begin{align}\label{Sum of 1/Dzeta(1)}
\lim_{s \rightarrow 1^+}\frac{1}{\zeta_\mathbb{F}^k(s)} = \lim_{s \rightarrow 1^+}\sum_{n=1}^{\infty}\frac{\mu_{\mathbb{F}, k}(n)}{n^s} = \sum_{n=1}^{\infty}\frac{\mu_{\mathbb{F}, k}(n)}{n} = 0.
\end{align}
We now define the following entire function:
\begin{align}\label{xi function}
\xi_{\mathbb{F}}(s) &:= \frac{1}{2}s(s-1) \left(\frac{D}{4^{r_2}\pi^d}\right)^{\frac{s}{2}}\Gamma^{r_1}\left(\frac{s}{2}\right)\Gamma^{r_2}(s)\zeta_{\mathbb{F}}(s) \nonumber \\
&= \frac{1}{2}s(s-1)\Omega_{\mathbb{F}}(s) = \xi_{\mathbb{F}}(1-s).
\end{align}
Putting $s=\frac{1}{2}+ it$ in the above equation and denoting it as $\Xi_{\mathbb{F}}(t)$, we have
\begin{align*}
\xi_{\mathbb{F}}\left(\frac{1}{2}+it\right) = \Xi_{\mathbb{F}}(t):= \left(-\frac{1}{8} - \frac{t^2}{2} \right)\left(\frac{D}{4^{r_2}\pi^d}\right)^{\frac{1}{4}+\frac{it}{2}}\Gamma^{r_1}\left(\frac{1}{4} + \frac{it}{2}\right)\Gamma^{r_2}\left(\frac{1}{2} + it\right)\zeta_{\mathbb{F}}\left(\frac{1}{2} + it\right).
\end{align*}
We prove the below result concerning $\Xi_{\mathbb{F}}(t)$ which will be used in proving Theorem \ref{BM2}.
\begin{lemma}\label{Xi as even and real fn}
The function $\Xi_{\mathbb{F}}(t)$ is even and real for all real values of t.
\end{lemma}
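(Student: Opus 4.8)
The plan is to prove Lemma \ref{Xi as even and real fn} by establishing the two properties separately, each reducing to well-known symmetries of the Gamma and Dedekind zeta factors. First I would prove that $\Xi_{\mathbb{F}}(t)$ is even, i.e. $\Xi_{\mathbb{F}}(-t) = \Xi_{\mathbb{F}}(t)$. The natural route is to observe that, by definition, $\Xi_{\mathbb{F}}(t) = \xi_{\mathbb{F}}\left(\frac{1}{2} + it\right)$, so that $\Xi_{\mathbb{F}}(-t) = \xi_{\mathbb{F}}\left(\frac{1}{2} - it\right)$. Now $\frac{1}{2} - it = 1 - \left(\frac{1}{2} + it\right)$, so the functional equation $\xi_{\mathbb{F}}(s) = \xi_{\mathbb{F}}(1-s)$ from \eqref{xi function} immediately gives $\xi_{\mathbb{F}}\left(\frac{1}{2} - it\right) = \xi_{\mathbb{F}}\left(\frac{1}{2} + it\right)$, hence $\Xi_{\mathbb{F}}(-t) = \Xi_{\mathbb{F}}(t)$. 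This step is essentially free once the functional equation \eqref{xi function} is in hand.

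The second, slightly more involved, step is to prove that $\Xi_{\mathbb{F}}(t)$ is real for all real $t$. My approach is to show that $\overline{\Xi_{\mathbb{F}}(t)} = \Xi_{\mathbb{F}}(t)$ by taking complex conjugates factor by factor in the explicit product expression for $\Xi_{\mathbb{F}}(t)$. The key input is the Schwarz reflection principle in the form $\overline{f(\bar{w})} = f(w)$ for any function $f$ given by a Dirichlet series or product with real coefficients; concretely, $\overline{\zeta_{\mathbb{F}}(\bar{s})} = \zeta_{\mathbb{F}}(s)$ since $\mathtt{a}_{\mathbb{F}}(n) \in \mathbb{R}$, and similarly $\overline{\Gamma(\bar{s})} = \Gamma(s)$. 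Applying this with $s = \frac{1}{2} + it$ and $t$ real, one has $\bar{s} = \frac{1}{2} - it = 1 - s$, so conjugating the factors $\Gamma^{r_1}\!\left(\frac{1}{4} + \frac{it}{2}\right)\Gamma^{r_2}\!\left(\frac{1}{2} + it\right)\zeta_{\mathbb{F}}\!\left(\frac{1}{2} + it\right)$ yields $\Gamma^{r_1}\!\left(\frac{1}{4} - \frac{it}{2}\right)\Gamma^{r_2}\!\left(\frac{1}{2} - it\right)\zeta_{\mathbb{F}}\!\left(\frac{1}{2} - it\right)$, i.e. the same product evaluated at $1 - s$. The prefactor $\left(-\frac{1}{8} - \frac{t^2}{2}\right)$ is already real, and $\overline{\left(\frac{D}{4^{r_2}\pi^d}\right)^{\frac{1}{4} + \frac{it}{2}}} = \left(\frac{D}{4^{r_2}\pi^d}\right)^{\frac{1}{4} - \frac{it}{2}}$ since $D, 4^{r_2}, \pi^d$ are positive reals. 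Thus $\overline{\Xi_{\mathbb{F}}(t)} = \Xi_{\mathbb{F}}(-t)$ as complex numbers, and combining with the evenness established in the first step gives $\overline{\Xi_{\mathbb{F}}(t)} = \Xi_{\mathbb{F}}(t)$, so $\Xi_{\mathbb{F}}(t)$ is real.

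The main obstacle, such as it is, lies in being careful about the reflection/conjugation identities: one must justify that $\overline{\zeta_{\mathbb{F}}(\bar s)} = \zeta_{\mathbb{F}}(s)$ on all of $\mathbb{C}$ (not merely in the region of absolute convergence) by analytic continuation, and likewise for the Gamma factors, and then correctly track that conjugation sends $\frac{1}{2} + it$ to $1 - \left(\frac{1}{2} + it\right)$ rather than leaving the argument fixed. The elegance of the argument is that evenness and the reflection identity conspire so that conjugation, which a priori produces $\Xi_{\mathbb{F}}$ at the reflected point $1-s$, is repaired by the functional equation back to the original point. I expect the write-up to be short: a one-line evenness argument from \eqref{xi function}, followed by a factor-by-factor conjugation computation invoking the reality of the Dirichlet coefficients $\mathtt{a}_{\mathbb{F}}(n)$ and the Schwarz reflection property of $\Gamma$.
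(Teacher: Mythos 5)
Your proposal is correct and takes essentially the same route as the paper's proof: both rest on the reflection identities $\overline{\Gamma(s)}=\Gamma(\bar{s})$ and $\overline{\zeta_{\mathbb{F}}(s)}=\zeta_{\mathbb{F}}(\bar{s})$ to obtain $\overline{\Xi_{\mathbb{F}}(t)}=\Xi_{\mathbb{F}}(-t)$, and on the functional equation $\xi_{\mathbb{F}}(s)=\xi_{\mathbb{F}}(1-s)$ to identify $\Xi_{\mathbb{F}}(-t)$ with $\Xi_{\mathbb{F}}(t)$. The only difference is organizational: you establish evenness first and then repair the conjugation by it, whereas the paper runs the same ingredients as a single chain of equalities yielding reality and evenness simultaneously.
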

\begin{proof}
Consider
{\allowdisplaybreaks
\begin{align*}
\overline{\Xi_{\mathbb{F}}(t)} &= \overline{\left(-\frac{1}{8} - \frac{t^2}{2} \right)}\overline{\left(\frac{D}{4^{r_2}\pi^d}\right)^{\frac{1}{4}+\frac{it}{2}}}\overline{\Gamma^{r_1}\left(\frac{1}{4} + \frac{it}{2}\right)}\overline{\Gamma^{r_2}\left(\frac{1}{2} + it\right)}\overline{\zeta_\mathbb{F}\left(\frac{1}{2} + it\right)} \nonumber \\
&= \left(-\frac{1}{8} - \frac{t^2}{2} \right)\left(\frac{D}{4^{r_2}\pi^d}\right)^{\frac{1}{4}-\frac{it}{2}}\Gamma^{r_1}\left(\frac{1}{4} - \frac{it}{2}\right)\Gamma^{r_2}\left(\frac{1}{2} - it\right)\zeta_{\mathbb{F}}\left(\frac{1}{2} - it\right) \nonumber \\
&= \Xi_{\mathbb{F}}(-t) \nonumber \\
&= \xi_{\mathbb{F}}\left(\frac{1}{2} - it\right) \nonumber \\
&= \xi_{\mathbb{F}}\left(1 - \frac{1}{2} + it\right) \nonumber \\
&= \xi_{\mathbb{F}}\left(\frac{1}{2} + it\right) \nonumber \\
&= \Xi_{\mathbb{F}}(t),
\end{align*}}
since $\overline{\Gamma(s)} = \Gamma(\overline{s})$ and $\overline{\zeta_\mathbb{F}(s)} = \zeta_\mathbb{F}(\overline{s})$. In the penultimate step, we have used the fact that $\xi_{\mathbb{F}}(s) = \xi_\mathbb{F}(1 - s)$. This completes the proof.
\end{proof}

Now we mention the following exact evaluation of an integral \cite[p.~36]{Titchmarsh}, where the integrand is related to the functional equation of the Riemann zeta function, namely,
\begin{align*}
\frac{1}{2\pi i}\int_{\frac{1}{2} - i\infty}^{\frac{1}{2} + i\infty}\pi^{-\frac{s}{2}}\Gamma\left(\frac{s}{2}\right)\zeta(s)y^s \text{d}s = W_1\left(\frac{1}{y^2}\right) - (1 + y),
\end{align*}
where $W_{1}(x)$ is the Jacobi theta function \eqref{Jacobi theta function}. We generalize the above integral evaluation for the Dedekind zeta function.

\begin{lemma}\label{Lemma 1}
Let $\Omega_{\mathbb{F}}(s)$ and $W_{\mathbb{F},1}(x)$ be the functions defined as in \eqref{Omega_F of fnal eqn} and \eqref{W_F,1(x)}, respectively. Then we have
\begin{align}\label{lemma 1}
\frac{1}{2\pi i}\int_{\frac{1}{2} - i\infty}^{\frac{1}{2} + i\infty}\Omega_{\mathbb{F}}(s)y^s \text{d}s = W_{\mathbb{F},1}\left(\frac{1}{y^2}\right) + C_\mathbb{F}2^{r_1}(1+y).
\end{align}
\end{lemma}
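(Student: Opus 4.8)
The plan is to evaluate the Mellin--Barnes integral on the left by shifting the line of integration and reading off the arithmetic content from the Dirichlet series of $\zeta_{\mathbb{F}}$. Write $I(y)$ for the left-hand side and take $y>0$, so that all the arguments appearing below are positive reals (hence inside the admissible sector $|\Arg| < \pi d/4$ of $\tilde{Z}_{r_1,r_2}$). Since $\zeta_{\mathbb{F}}(s)=\sum_n \mathtt{a}_{\mathbb{F}}(n)n^{-s}$ converges only for $\Re(s)>1$, the first step is to move the contour from $\Re(s)=\tfrac12$ rightward to a line $\Re(s)=c$ with $c>1$. In the strip $\tfrac12<\Re(s)<c$ the only singularity of the integrand $\Omega_{\mathbb{F}}(s)y^s$ is the simple pole of $\zeta_{\mathbb{F}}(s)$ at $s=1$, since $\Gamma^{r_1}(s/2)$ and $\Gamma^{r_2}(s)$ have poles only at $\Re(s)\le 0$. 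Applying the residue theorem on a rectangle with vertical sides $\Re(s)=\tfrac12$ and $\Re(s)=c$ and letting the height tend to infinity gives
\begin{equation*}
I(y)=\frac{1}{2\pi i}\int_{(c)}\Omega_{\mathbb{F}}(s)y^s\,\mathrm{d}s-\operatorname*{Res}_{s=1}\Omega_{\mathbb{F}}(s)y^s.
\end{equation*}

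Next I would compute the residue. Using $\Gamma(1/2)=\sqrt{\pi}$, $\Gamma(1)=1$, the identity $d=r_1+2r_2$, and the class number formula $\operatorname*{Res}_{s=1}\zeta_{\mathbb{F}}(s)=H_{\mathbb{F}}=\tfrac{2^{r_1}(2\pi)^{r_2}}{\sqrt{D}}\tfrac{R_{\mathbb{F}}h_{\mathbb{F}}}{w_{\mathbb{F}}}$, one finds
\begin{equation*}
\operatorname*{Res}_{s=1}\Omega_{\mathbb{F}}(s)y^s=\left(\frac{D}{4^{r_2}\pi^d}\right)^{1/2}\pi^{r_1/2}H_{\mathbb{F}}\,y=\frac{\sqrt{D}}{(2\pi)^{r_2}}H_{\mathbb{F}}\,y=2^{r_1}\frac{R_{\mathbb{F}}h_{\mathbb{F}}}{w_{\mathbb{F}}}\,y=-2^{r_1}C_{\mathbb{F}}\,y,
\end{equation*}
where the last equality uses the definition $C_{\mathbb{F}}=-R_{\mathbb{F}}h_{\mathbb{F}}/w_{\mathbb{F}}$ from \eqref{Laurent series_at s=0_1st coeff}. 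Hence $-\operatorname*{Res}_{s=1}\Omega_{\mathbb{F}}(s)y^s=+2^{r_1}C_{\mathbb{F}}\,y$.

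On the line $\Re(s)=c>1$ I would insert the absolutely convergent Dirichlet series, interchange summation and integration, and collect the $s$-dependent factors as $\bigl(\tfrac{2^{r_2}\pi^{d/2}n}{\sqrt{D}\,y}\bigr)^{-s}$. Each resulting integral is exactly $\tilde{Z}_{r_1,r_2}$ by its definition \eqref{Defn of Z} (the hypothesis $c>0$ is satisfied), giving
\begin{equation*}
\frac{1}{2\pi i}\int_{(c)}\Omega_{\mathbb{F}}(s)y^s\,\mathrm{d}s=\sum_{n=1}^{\infty}\mathtt{a}_{\mathbb{F}}(n)\,\tilde{Z}_{r_1,r_2}\!\left(\frac{2^{r_2}\pi^{d/2}n}{\sqrt{D}\,y}\right).
\end{equation*}
Since $\sqrt{1/y^2}=1/y$, comparison with \eqref{W_F,1(x)} shows this sum equals $W_{\mathbb{F},1}(1/y^2)+2^{r_1}C_{\mathbb{F}}$. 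Combining with the residue term yields $I(y)=W_{\mathbb{F},1}(1/y^2)+2^{r_1}C_{\mathbb{F}}+2^{r_1}C_{\mathbb{F}}\,y=W_{\mathbb{F},1}(1/y^2)+C_{\mathbb{F}}2^{r_1}(1+y)$, as claimed.

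The main obstacle is analytic rather than algebraic: I must justify both the contour shift and the term-by-term integration. This rests on Stirling's formula, which shows that $\Gamma^{r_1}(s/2)\Gamma^{r_2}(s)$ decays like $e^{-\pi(r_1+2r_2)|t|/4}=e^{-\pi d|t|/4}$ along vertical lines, together with a polynomial (convexity) bound for $\zeta_{\mathbb{F}}(\sigma+it)$ in the relevant strip. The exponential decay makes the integrand absolutely integrable on each vertical line, forces the horizontal sides of the rectangle to vanish as the height tends to infinity, and legitimizes interchanging the sum with the integral on $\Re(s)=c$.
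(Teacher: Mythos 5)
Your proposal is correct and follows essentially the same argument as the paper: both identify the $(c)$-line integral ($c>1$) with $\sum_n \mathtt{a}_{\mathbb{F}}(n)\tilde{Z}_{r_1,r_2}\bigl(2^{r_2}\pi^{d/2}n/(\sqrt{D}\,y)\bigr)$ via term-by-term Mellin inversion, shift the contour between $\Re(s)=\tfrac12$ and $\Re(s)=c$ picking up the simple pole of $\zeta_{\mathbb{F}}(s)$ at $s=1$, and evaluate the residue as $-2^{r_1}C_{\mathbb{F}}\,y$ through the class number formula. The only differences are presentational (you shift rightward from $\Re(s)=\tfrac12$ whereas the paper starts from the series and shifts leftward) and that you spell out the Stirling decay and convexity-bound justifications somewhat more explicitly than the paper does.
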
	
\begin{proof}
Using the definition \eqref{W_F,1(x)} of $W_{\mathbb{F},1}(x)$ and \eqref{Defn of Z} of $\tilde{Z}_{r_1, r_2}(x)$, one can write
\begin{align*}
W_{\mathbb{F},1}\left(\frac{1}{y^2}\right) &= \sum_{n=1}^\infty \mathtt{a}_\mathbb{F}(n)\tilde{Z}_{r_1, r_2}\left(\frac{2^{r_2}\pi^{d/2} n}{y\sqrt{D}} \right) - C_\mathbb{F}2^{r_1}  \nonumber\\
&= \sum_{n=1}^\infty \mathtt{a}_\mathbb{F}(n) \frac{1}{2\pi i} \int_{c - i\infty}^{c+i\infty}\Gamma^{r_1}\left(\frac{s}{2}\right)\Gamma^{r_2}(s)\left(\frac{2^{r_2}\pi^{d/2} n}{y\sqrt{D}} \right)^{-s} \text{d}s - C_\mathbb{F}2^{r_1},
\end{align*}
with $c>1$. Hence, upon changing the summation and integration, we get
\begin{align}
\frac{1}{2\pi i} \int_{c - i\infty}^{c+i\infty}\Omega_{\mathbb{F}}(s)y^{s}\text{d}s &= C_\mathbb{F}2^{r_1} + W_{\mathbb{F},1}\left(\frac{1}{y^2}\right), \label{integral with fnal eqn}
\end{align}
where $\Omega_{\mathbb{F}}(s)$ is defined in \eqref{Omega_F of fnal eqn}.
We now set up a rectangular contour $\mathcal{C}$ made up of the vertices $c-iT, c+iT, \frac{1}{2}+iT$ and $\frac{1}{2}-iT$ taken in counter-clockwise direction with $T$ being some large positive quantity.  
\begin{center}
\begin{tikzpicture}[very thick,decoration={
  markings,
  mark=at position 0.6 with {\arrow{>}}}
 ] 
 \draw[thick,dashed,postaction={decorate}] (-0.5,-2)--(2.5,-2) node[below right, black] {$c-i T$};
 \draw[thick,dashed,postaction={decorate}] (2.5,-2)--(2.5,2) node[above right, black] {$c+iT$} ;
 \draw[thick,dashed,postaction={decorate}] (2.5,2)--(-0.5,2) node[left, black] {$\frac{1}{2}+i T$}; 
 \draw[thick,dashed,postaction={decorate}] (-0.5,2)--(-0.5,-2) node[below left, black] {$\frac{1}{2}-i T$}; 
 \draw[thick, <->] (-3,0) -- (5,0) coordinate (xaxis);
 \draw[thick, <->] (-2,-3) -- (-2,3)node[midway, above left, black] {\tiny0} coordinate (yaxis);
 \draw (-0.5,0.1)--(-0.5,-0.1) node[midway, above left, black]{\tiny $\frac{1}{2}$};
 \draw (1,0.1)--(1,-0.1) node[midway, above, black]{\tiny $1$};
 \draw (4,0.1)--(4,-0.1) node[midway, above, black]{\tiny $2$};
 \node[above] at (xaxis) {$\mathrm{Re}(s)$};
 \node[right] at (yaxis) {$\mathrm{Im}(s)$};
\end{tikzpicture}
\end{center}
Now we use Cauchy's residue theorem to have
\begin{align*}
\frac{1}{2\pi i}\int_{\mathcal{C}}\Omega_{\mathbb{F}}(s)y^{s}\text{d}s = \mathcal{R},
\end{align*}
where $\mathcal{R}$ is the sum of the residues of the integrand function inside the contour. We know that $\zeta_\mathbb{F}(s)$ has a simple pole at $s=1$ with residue $H_{\mathbb{F}}$ which indicates that the integrand function has a simple pole at $s=1$ inside the contour. Letting $T \rightarrow \infty$, we can see that the horizontal integrals vanish due to the exponential decay of $\Gamma(s)$. Therefore, we are left with
\begin{align}
\frac{1}{2\pi i} \int_{c - i\infty}^{c+i\infty}\Omega_{\mathbb{F}}(s)y^{s}\text{d}s &= \frac{1}{2\pi i} \int_{\frac{1}{2} - i\infty}^{\frac{1}{2}+i\infty}\Omega_{\mathbb{F}}(s)y^{s}\text{d}s + \mathcal{R} \nonumber \\
\Rightarrow \frac{1}{2\pi i} \int_{\frac{1}{2} - i\infty}^{\frac{1}{2}+i\infty}\Omega_{\mathbb{F}}(s)y^{s}\text{d}s &= C_\mathbb{F}2^{r_1} + W_{\mathbb{F},1}\left(\frac{1}{y^2}\right) - \mathcal{R}. \label{lemma 1 with R}
\end{align}
Here we have used \eqref{integral with fnal eqn} in the last step. Now we calculate the residual term,
\begin{align*}
\mathcal{R} &= \lim_{s \rightarrow 1} (s-1)\left(\frac{D}{4^{r_2}\pi^d}\right)^{\frac{s}{2}}\Gamma^{r_1}\left(\frac{s}{2}\right)\Gamma^{r_2}(s)\zeta_{\mathbb{F}}(s)y^s\\
&= -2^{r_1}yC_\mathbb{F}.
\end{align*}
Using the value of $\mathcal{R}$ in \eqref{lemma 1 with R}, we complete the proof of \eqref{lemma 1}.
\end{proof}

In the next section, we present proofs of the main results.

\section{{\bf Proof of Main Results}}

\begin{proof}[Theorem \rm{\ref{BM1}}][]
Let us define 
\begin{align}\label{S_F,k (x)}
S_{\mathbb{F}, k}(x) := \sum_{n=1}^{\infty}\mathtt{a}_{\mathbb{F}, k}(n)\tilde{Z}_{kr_1, kr_2}\left(\frac{2^{kr_2}\pi^{kd/2} n\sqrt{x}}{\sqrt{D^k}} \right), \quad |\Arg(x)| < \frac{\pi d}{2}.
\end{align}
Note that the function $\mathtt{a}_{\mathbb{F},k}(n)$ has the following bound, for any $\epsilon > 0$,
\begin{align}\label{bound for counting function}
\mathtt{a}_{\mathbb{F},k}(n) \ll n^{\epsilon},
\end{align}
which can be deduced using the bound for $\mathtt{a}_{\mathbb{F}}(n) \ll n^\epsilon$ due to Chandrasekharan and Narasimhan \cite[Lemma~9]{CN1963}. An upper bound for the function $\tilde{Z}_{r_1, r_2}(x)$ is given by \cite[Equation (2.3)]{DGV2022}
\begin{align}\label{bound for Z}
\tilde{Z}_{r_1, r_2}(y) \ll_{r_1, r_2}~ y^{-\frac{r_1 + r_2 -1}{d}}\exp{\left(-d\left(\frac{y}{2^{r_2}}\right)^{\frac{2}{d}}\right)},
\end{align}
as $y\rightarrow \infty$.
Using the bounds \eqref{bound for counting function} and \eqref{bound for Z}, one can easily see that the series $S_{\mathbb{F}, k}(x)$ converges absolutely. Moreover, as $x \rightarrow \infty$, we can check that
\begin{align}\label{Bound for S(x)}
S_{\mathbb{F}, k}(x) \ll_{k, r_1, r_2}~ x^{-\frac{kr_1 + kr_2 -1}{2kd}}\exp{\left(-\pi kd\left(\frac{x}{D^k}\right)^{\frac{1}{kd}}\right)}.
\end{align}
Using the integral representation \eqref{Defn of Z} of $\tilde{Z}_{r_1, r_2}(x)$, we rewrite $S_{\mathbb{F}, k}(x)$ as
\begin{align}\label{Theta reln 1}
S_{\mathbb{F}, k}(x) &= \sum_{n=1}^{\infty}\mathtt{a}_{\mathbb{F}, k}(n) \frac{1}{2\pi i}\int_{(c)}\Gamma^{kr_1}\left(\frac{s}{2}\right)\Gamma^{kr_2}\left(s\right)\left(\frac{2^{kr_2}\pi^{kd/2}n \sqrt{x}}{\sqrt{D^k}} \right)^{-s} \text{d}s.
\end{align}
Now we interchange the summation and integration by assuming $\Re(s) = c >1$ as the series $\sum_{n=1}^{\infty}\frac{\mathtt{a}_{\mathbb{F}, k}(n)}{n^{s}} = \zeta_{\mathbb{F}}^k(s)$ is uniformly and absolutely convergent in the region $\Re(s) > 1$. Hence, we have
\begin{align*}
S_{\mathbb{F}, k}(x) &= \frac{1}{2\pi i}\int_{(c)} \Gamma^{kr_1}\left(\frac{s}{2}\right)\Gamma^{kr_2}\left(s\right)\zeta_{\mathbb{F}}^k(s)\left(\frac{2^{kr_2}\pi^{kd/2} \sqrt{x}}{\sqrt{D^k}} \right)^{-s} \text{d}s \\
&= \frac{1}{2\pi i}\int_{(c)}\left(\frac{D}{4^{r_2}\pi^d}\right)^{\frac{ks}{2}}\Gamma^{kr_1}\left(\frac{s}{2}\right)\Gamma^{kr_2}(s)\zeta_{\mathbb{F}}^k(s) x^{-\frac{s}{2}} \text{d}s \\
&= \frac{1}{2\pi i}\int_{(c)}\Omega_{\mathbb{F}}^k(s)x^{-\frac{s}{2}} \text{d}s,
\end{align*}
where $\Omega_{\mathbb{F}}(s)$ is defined as in \eqref{Omega_F of fnal eqn}. We take a rectangular contour $\mathcal{C}$ made up of the vertices $c-iT, c+iT, \alpha+iT$ and $\alpha-iT$ taken in counter-clockwise direction. We already have $c > 1$ with some large value of $T$ and we choose $-1 < \alpha < 0$ so that $ \beta = 1-\alpha >1$. The integrand function $I(s):= \Omega_{\mathbb{F}}^k(s)x^{-s/2}$ has a pole of order $k$ at $s=0$ because of the pole of order $k(r_1 + r_2)$ by $\Gamma^{kr_1}\left(\frac{s}{2}\right)\Gamma^{kr_2}(s)$ and zero of order $k(r_1 + r_2 - 1)$ by $\zeta_\mathbb{F}^k(s)$ at $s=0$. Further, $I(s)$ has a pole of order $k$ at $s=1$ due to $\zeta_\mathbb{F}^k(s)$.
Therefore, by using Cauchy's residue theorem and taking $T \rightarrow \infty$, we have
\begin{align}
S_{\mathbb{F}, k}(x) &= R_0(x) + R_1(x) + \frac{1}{2\pi i}\int_{\alpha-i \infty}^{\alpha+i \infty}\Omega_{\mathbb{F}}^k(s)x^{-\frac{s}{2}} \text{d}s \nonumber \\
&= R_0(x) + R_1(x) + \frac{1}{2\pi i}\int_{\alpha-i \infty}^{\alpha+i \infty}\Omega_{\mathbb{F}}^k(1-s)x^{-\frac{s}{2}} \text{d}s \nonumber \\
&= R_0(x) + R_1(x) + \frac{1}{2\pi i}\int_{\beta-i \infty}^{\beta+i \infty}\Omega_{\mathbb{F}}^k(s)x^{-\frac{(1-s)}{2}} \text{d}s \nonumber \\
&= R_0(x) + R_1(x) + \frac{1}{\sqrt{x}}S_{\mathbb{F}, k}\left(\frac{1}{x} \right),\label{CRT Dedekind zeta}
\end{align}
where the terms $R_0(x)$ and $R_1(x)$ denote the residue of $I(s)$ at $s=0$ and $s=1$, respectively.
 The residues are given by
\begin{align}
R_0(x) &= \frac{1}{(k-1)!}\lim_{s \rightarrow 0}\frac{\text{d}^{k-1}}{\text{d}s^{k-1}}\left(s^k \Omega_{\mathbb{F}}^k(s)x^{-s/2}\right), \label{R_0(x)}\\
R_1(x) &= \frac{1}{(k-1)!}\lim_{s \rightarrow 1}\frac{\text{d}^{k-1}}{\text{d}s^{k-1}}\left((s-1)^k \Omega_{\mathbb{F}}^k(s)x^{-s/2}\right). \label{R_1(x)}
\end{align}
One can check that $R_1(x) = -\frac{1}{\sqrt{x}}R_0\left(\frac{1}{x}\right)$ with the help of the functional equation of $\zeta_{\mathbb{F}}^k(s)$ in $R_1(x)$. Employ this relation in \eqref{CRT Dedekind zeta} to get
\begin{align}\label{Transform for S_F,k}
S_{\mathbb{F}, k}(x) = R_0(x) - \frac{1}{\sqrt{x}}R_0\left(\frac{1}{x}\right) + \frac{1}{\sqrt{x}}S_{\mathbb{F}, k}\left(\frac{1}{x} \right).
\end{align}
Hence, we finally have
\begin{align}\label{End of thm 1 half}
W_{\mathbb{F}, k}\left(\frac{1}{x}\right) = \sqrt{x}W_{\mathbb{F}, k}(x),
\end{align}
where $W_{\mathbb{F}, k}(x) = S_{\mathbb{F}, k}(x) - R_0(x)$, which is exactly same as in \eqref{W_F, k(x)}.

Now we prove the converse of the theorem. We assume that \eqref{End of thm 1 half} holds. From \eqref{Defn of Z}, we know that $\tilde{Z}_{r_1, r_2}(t)$ is the inverse Mellin transform of $\Gamma^{r_1}\left(\frac{s}{2}\right)\Gamma^{r_2}(s)$. Therefore, we have
\begin{align*}
\Gamma^{kr_1}\left(\frac{s}{2}\right)\Gamma^{kr_2}(s) = \int_0^\infty t^{s-1}\tilde{Z}_{kr_1, kr_2}(t)\, \text{d}t, \quad \Re(s) > 0.
\end{align*}
Putting $t = \left(\frac{2^{kr_2}\pi^{kd/2} n\sqrt{x}}{\sqrt{D^k}} \right)$, one gets
\begin{align}\label{Change of variable}
\left(\frac{D}{4^{r_2}\pi^d}\right)^{\frac{ks}{2}}\Gamma^{kr_1}\left(\frac{s}{2}\right)\Gamma^{kr_2}(s)n^{-s} = \frac{1}{2}\int_0^\infty x^{\frac{s}{2}-1}\tilde{Z}_{kr_1, kr_2}\left(\frac{2^{kr_2}\pi^{kd/2} n\sqrt{x}}{\sqrt{D^k}} \right)\, \text{d}x.
\end{align}
We now assume $\Re(s) > 1$. Multiplying both sides of \eqref{Change of variable} by $\mathtt{a}_{\mathbb{F}, k}(n)$ and summing over $n$, and then using the definition \eqref{S_F,k (x)} of $S_{\mathbb{F}, k}(x)$, we get
\begin{align*}
\Omega_{\mathbb{F}}^k(s) &= \frac{1}{2}\int_0^\infty x^{\frac{s}{2}-1}\sum_{n=1}^{\infty}\mathtt{a}_{\mathbb{F}, k}(n)\tilde{Z}_{kr_1, kr_2}\left(\frac{2^{kr_2}\pi^{kd/2} n\sqrt{x}}{\sqrt{D^k}} \right)\, \text{d}x \\
&= \frac{1}{2}\int_0^\infty x^{\frac{s}{2}-1}S_{\mathbb{F}, k}(x) \, \text{d}x \\
&= \frac{1}{2}\int_0^1 x^{\frac{s}{2}-1}S_{\mathbb{F}, k}(x) \text{d}x + \frac{1}{2}\int_1^\infty x^{\frac{s}{2}-1}S_{\mathbb{F}, k}(x)\text{d}x \\
&= \frac{1}{2}\int_1^\infty x^{-\frac{s}{2}-1}S_{\mathbb{F}, k}\left(\frac{1}{x} \right) \text{d}x + \frac{1}{2}\int_1^\infty x^{\frac{s}{2}-1}S_{\mathbb{F}, k}(x) \text{d}x.
\end{align*}
Using \eqref{Transform for S_F,k} for the value of $S_{\mathbb{F}, k}\left(\frac{1}{x} \right)$, we have
\begin{align}
\Omega_{\mathbb{F}}^k(s) &= \frac{1}{2}\int_1^\infty x^{-\frac{s}{2}-1}\left(\sqrt{x}S_{\mathbb{F}, k}(x) - \sqrt{x}R_0(x) + R_0\left(\frac{1}{x}\right)\right) \text{d}x + \frac{1}{2}\int_1^\infty x^{\frac{s}{2}-1}S_{\mathbb{F}, k}(x)\text{d}x \nonumber \\
&= \frac{1}{2}\int_1^\infty x^{-\frac{s}{2}-1}\left(R_0\left(\frac{1}{x}\right) - \sqrt{x}R_0(x)\right) \text{d}x + \frac{1}{2}\int_1^\infty (x^{\frac{-s-1}{2}} + x^{\frac{s}{2}-1})S_{\mathbb{F}, k}(x) \text{d}x. \label{Symmetry of funl eqn general}
\end{align}
Using the bound \eqref{Bound for S(x)} for $S_{\mathbb{F}, k}(x)$, we can see that the second integral in \eqref{Symmetry of funl eqn general} converges absolutely for any $s \in \mathbb{C}$. Moreover, it is  symmetric over $s$ and $1-s$. Let us define the first integral in \eqref{Symmetry of funl eqn general} as 
$$
J(s) = \int_1^\infty x^{-\frac{s}{2}-1}\left(R_0\left(\frac{1}{x}\right) - \sqrt{x}R_0(x)\right) \text{d}x, \quad \Re(s) > 1.
$$
From \eqref{R_0(x)}, it is evident that $R_0(x)$ is some polynomial in $\mathbb{C}[\log(x)]$ of degree $k-1$.
Let us assume
\begin{align*}
R_0(x) &= a_0 + a_1 \log x + \cdots + a_{k-1}(\log x)^{k-1}, \\
\Rightarrow R_0\left(\frac{1}{x}\right) &= a_0 - a_1 \log x + \cdots + (-1)^{k-1}a_{k-1}(\log x)^{k-1},
\end{align*}
where $a_i's \in \mathbb{C}$. Putting these values of $R_0(x)$ and $R_0\left(\frac{1}{x}\right)$ in $J(s)$, we see that
\begin{align*}
J(s) &= \int_1^\infty a_0[x^{-\frac{s}{2}-1} - x^{\frac{-s-1}{2}}]\text{d}x + \int_1^\infty a_1\log x [-x^{-\frac{s}{2}-1} - x^{\frac{-s-1}{2}}]\text{d}x + \cdots \\
&+ \int_1^\infty a_{k-1}(\log x)^{k-1}[(-1)^{k-1}x^{-\frac{s}{2}-1} - x^{\frac{-s-1}{2}}]\text{d}x.
\end{align*}
Solving above integrals by substituting $x = e^u$ and integrating by parts, one gets
\begin{align*}
J(s) = 2a_0\left[\frac{1}{s} + \frac{1}{1-s}\right] - 4a_1\left[\frac{1}{s^2} + \frac{1}{(1-s)^2}\right] + \cdots + (-1)^{k-1}2^k a_{k-1}(k-1)!\left[\frac{1}{s^k} + \frac{1}{(1-s)^k}\right].
\end{align*}
The above expression of $J(s)$ suggests that it is analytic in the whole complex plane except at $s = 0$ and $s=1$ which are poles of order $k$.  Moreover, $J(s)$ is symmetric over $s$ and $1-s$. This gives the analytic continuation of $\zeta_\mathbb{F}^k(s)$ and symmetry of \eqref{Symmetry of funl eqn general} over $s$ and $1-s$, that is, $\Omega_{\mathbb{F}}^k(s) = \Omega_{\mathbb{F}}^k(1-s)$. This completes the proof.
\end{proof}

\begin{proof}[Corollary \rm{\ref{Jacobi theta for dedekind zeta}}][]
When $k=1$, $\mathtt{a}_{\mathbb{F}, k}(n) = \mathtt{a}_{\mathbb{F}}(n)$ and the residual term $R_0(x)$ in Theorem \ref{BM1} simplifies as
\begin{align}
R_0(x) &= \lim_{s \rightarrow 0}\left(s \Gamma^{r_1}\left(\frac{s}{2}\right)\Gamma^{r_2}(s)\zeta_\mathbb{F}(s)\left(\frac{2^{r_2}\pi^{d/2} \sqrt{x}}{\sqrt{D}} \right)^{-s}\right) \nonumber \\
&= \lim_{s \rightarrow 0} \left(s^{r_1+r_2}\Gamma^{r_1}\left(\frac{s}{2}\right)\Gamma^{r_2}(s)\frac{\zeta_{\mathbb{F}}(s)}{s^{r_1+r_2-1}}\left(\frac{2^{2r_2}\pi^d x}{D}\right)^{-\frac{s}{2}}\right) \nonumber \\
&= 2^{r_1}C_{\mathbb{F}}. \label{Residue}
\end{align}
In the last step, we have used \eqref{Laurent series_at s=0_1st coeff}.
Putting the above values in \eqref{Theta reln for dedekind zeta power k}, we get the desired result \eqref{Theta reln for dedekind zeta}.
\end{proof}

\begin{proof}[Corollary \rm{\ref{Theta relation for real zeta power k}}][]
In this case, we have $r_2 = 0, d = r_1$ since $\mathbb{F}$ is totally real. From \eqref{Z value real}, we know that $\tilde{Z}_{r_1, 0}(x) = 2V(x^2|{\bf \bar{0}}_{r_1})$. Hence, the term in \eqref{W_F, k(x)} becomes
\begin{align*}
\tilde{Z}_{kr_1, kr_2}\left(\frac{2^{kr_2}\pi^{kd/2} n\sqrt{x}}{\sqrt{D^k}} \right) = 2V\left(\frac{\pi^{kr_1} n^2 x}{D^k} \bigg| \bar{0}_{kr_1} \right),
\end{align*}
and
\begin{align*}
R_0(x)
&= \frac{1}{(k-1)!}\lim_{s \rightarrow 0}\frac{\mathrm{d}^{k-1}}{\mathrm{d}s^{k-1}}\left(s^k \Gamma^{kr_1}\left(\frac{s}{2}\right)\zeta_\mathbb{F}^k(s)\left(\frac{\pi^{(kr_{1}/2)} \sqrt{x}}{\sqrt{D^k}} \right)^{-s}\right).
\end{align*}
We use the above values in Theorem \ref{BM1} to get the desired result.
\end{proof}

%

\begin{proof}[Corollary \rm{\ref{Theta relation for real quadratic field}}][]
In this case, we have $r_1 = 2$ and $k = 1$. From \eqref{Bessel fn formula}, we have $V(z| 0, 0) = 2\text{K}_{0}(2\sqrt{z})$. Hence the term $W_{\mathbb{F}, 1}(x)$ in Corollary \ref{Theta relation for real zeta power k} reduces to 
\begin{align*}
W_{\mathbb{F}, 1}(x) 
&= 2\sum_{n=1}^{\infty}\mathtt{a}_{\mathbb{F}}(n)V\left(\frac{\pi^{2} n^2 x}{D} \bigg| 0, 0 \right) - R_0(x) \\
&= 4\sum_{n=1}^{\infty}\mathtt{a}_{\mathbb{F}}(n)K_0\left(\frac{2\pi n \sqrt{x}}{\sqrt{D}}\right) - 4C_{\mathbb{F}}.
\end{align*}
Here we have used \eqref{Residue} in the final step.
Hence substituting the above value in Corollary \ref{Theta relation for real zeta power k}, we prove the result.
\end{proof}

\begin{proof}[Corollary \rm{\ref{Equivalence for zeta power k}}][]
The proof of this corollary immediately follows by taking $\mathbb{F} = \mathbb{Q}$ in Corollary \ref{Theta relation for real zeta power k}.
\end{proof}

\begin{proof}[Theorem \rm{\ref{BM3}}][]
Let us define
\begin{align}\label{Defn of L}
L_{\mathbb{F}, -k}(x) := \sum_{n=1}^{\infty}\frac{\mu_{\mathbb{F}, k}(n)}{n}Z_{kr_1, kr_2}\left(\frac{2^{kr_2}\pi^{kd/2} \sqrt{x}}{n\sqrt{D^k}} \right).
\end{align}
From the definition \eqref{Defn of Z without tilde} of ${Z}_{r_1, r_2}\left(x\right)$, we have
\begin{align*}
L_{\mathbb{F}, -k}(x) 
&= \sum_{n=1}^{\infty}\frac{\mu_{\mathbb{F}, k}(n)}{n} \frac{1}{2\pi i}\int_{(b)}\Gamma^{kr_1}\left(\frac{s}{2}\right)\Gamma^{kr_2}(s)\left(\frac{2^{kr_2}\pi^{kd/2} \sqrt{x}}{n\sqrt{D^k}} \right)^{-s} \text{d}s,
\end{align*}
where $-1<b<0$.
Interchanging the summation and integration, as the series $\sum_{n=1}^{\infty}\mu_{\mathbb{F}, k}(n)n^{s-1} = \zeta_{\mathbb{F}}^{-k}(1-s)$ is uniformly and absolutely convergent for $-1< \Re(s) = b < 0$, so we have
\begin{align*}
L_{\mathbb{F}, -k}(x) &= \frac{1}{2\pi i}\int_{(b)}\left(\frac{D}{4^{r_2}\pi^d}\right)^{\frac{ks}{2}}\Gamma^{kr_1}\left(\frac{s}{2}\right)\Gamma^{kr_2}(s) \zeta_{\mathbb{F}}^{-k}(1-s) x^{-\frac{s}{2}} \text{d}s  \\
&= \frac{1}{2\pi i}\int_{(b)}\Lambda_{\mathbb{F}}^{k}(s)x^{-\frac{s}{2}} \text{d}s,
\end{align*}
where $\Lambda_{\mathbb{F}}^{k}(s)$ is defined as in \eqref{Functional eqn for dedekind zeta power -k}.
Consider a rectangular contour $\mathcal{C}$ with vertices $\alpha-iT$, $\alpha+iT, b+iT$, and $b-iT$ taken in counter-clockwise direction. We have $-1<b<0$ with large value of $T$ and we choose $1 < \alpha < 2$ so that $ \beta = 1-\alpha <0$. The integrand function has a pole of order $kr$  at $s=0$, where $r = r_1 + r_2 -1$, arising from a pole of order $k(r_1 + r_2)$ due to $\Gamma^{kr_1}\left(\frac{s}{2}\right)\Gamma^{kr_2}(s)$ and a zero of order $k$ from $\zeta_\mathbb{F}^{-k}(1-s)$ at $s=0$. Additionally, the integrand has a pole of order $kr$ at $s=1$, due to $\zeta_{\mathbb{F}}^{-k}(1-s)$. As $T \rightarrow \infty$, the integrand encounters infinitely many poles of order $k$ at the non-trivial zeros $s = \rho$ of $\zeta_\mathbb{F}(s)$, that lie in the critical strip $0 < \Re(s) < 1$. Here, we are assuming the simplicity of the non-trivial zeros of $\zeta_\mathbb{F}(s)$.

Hence, using Cauchy's residue theorem and taking $T \rightarrow \infty$,
we have
{\allowdisplaybreaks \begin{align}
L_{\mathbb{F}, -k}(x) &=  \frac{1}{2\pi i}\int_{\alpha-i \infty}^{\alpha+i \infty}\Lambda_{\mathbb{F}}^{k}(s)x^{-\frac{s}{2}} \text{d}s - R_0(x) - R_1(x) - \sum_{\rho} R_{\rho}(x) \nonumber \\
&= \frac{1}{2\pi i}\int_{\alpha-i \infty}^{\alpha+i \infty}\Lambda_{\mathbb{F}}^{k}(1-s)x^{-\frac{s}{2}} \text{d}s - R_0(x) - R_1(x) - \sum_{\rho} R_{\rho}(x) \nonumber \\
&= \frac{1}{2\pi i}\int_{\beta-i \infty}^{\beta+i \infty}\Lambda_{\mathbb{F}}^{k}(s)x^{-\frac{(1-s)}{2}} \text{d}s - R_0(x) - R_1(x) - \sum_{\rho} R_{\rho}(x) \nonumber \\
&= \frac{1}{\sqrt{x}}L_{\mathbb{F}, -k}\left(\frac{1}{x} \right) - R_0(x) - R_1(x) - \sum_{\rho} R_{\rho}(x),\label{CRT Dedekind zeta for negative k}
\end{align}}
where the terms $R_0(x), R_1(x)$ and $R_\rho(x)$ denote the residue of $\Lambda_{\mathbb{F}}^{k}(s)x^{-\frac{s}{2}}$ at $s=0, s=1$ and $s=\rho$ respectively. The residues are given by
{\allowdisplaybreaks
\begin{align}
R_0(x) &= \frac{1}{(kr-1)!}\lim_{s \rightarrow 0}\frac{\text{d}^{kr-1}}{\text{d}s^{kr-1}}\left(s^{kr} \Lambda_{\mathbb{F}}^{k}(s)( \sqrt{x})^{-s}\right), \label{Def R_0(x)}\\
R_1(x) &= \frac{1}{(kr-1)!}\lim_{s \rightarrow 1}\frac{\text{d}^{kr-1}}{\text{d}s^{kr-1}}\left((s - 1)^{kr} \Lambda_{\mathbb{F}}^{k}(s)( \sqrt{x})^{-s}\right), \label{Def R_1(x)}\\
R_\rho(x) &= \frac{1}{(k-1)!}\lim_{s \rightarrow \rho}\frac{\text{d}^{k-1}}{\text{d}s^{k-1}}\left((s-\rho)^k \Lambda_{\mathbb{F}}^{k}(s)(\sqrt{x})^{-s}\right). \label{Def R_rho(x)}
\end{align}}
It can be verified that $R_1(x) = -\frac{1}{\sqrt{x}}R_0\left(\frac{1}{x}\right)$ by applying functional equation of $\zeta_{\mathbb{F}}^{-k}(s)$ in $R_1(x)$. Utilizing this relation in \eqref{CRT Dedekind zeta for negative k}, we get
\begin{align*}
L_{\mathbb{F}, -k}(x) + R_0(x) = \frac{1}{\sqrt{x}}\left[L_{\mathbb{F}, -k}\left(\frac{1}{x} \right) + R_0\left(\frac{1}{x}\right)\right] - \sum_{\rho} R_{\rho}(x).
\end{align*}
Now letting $\alpha\beta=1$ in the last equation, where $\alpha = x$ and $\beta = \frac{1}{x}$, we obtain
\begin{align}\label{Symmetry in alpha beta}
\alpha^{\frac{1}{4}}\left[L_{\mathbb{F}, -k}\left(\alpha\right) + R_0(\alpha)\right] &= \beta^{\frac{1}{4}}\left[L_{\mathbb{F}, -k}\left(\beta\right) + R_0(\beta) \right]  - \alpha^{\frac{1}{4}}\sum_{\rho} R_{\rho}(\alpha).
\end{align}
Since $\alpha\beta=1$, we replace $\alpha$ by $\beta$ in \eqref{Symmetry in alpha beta} to get
\begin{align}\label{Symmetry in beta alpha}
\beta^{\frac{1}{4}}\left[L_{\mathbb{F}, -k}\left(\beta\right) + R_0(\beta) \right] &=  \alpha^{\frac{1}{4}}\left[L_{\mathbb{F}, -k}\left(\alpha\right) + R_0(\alpha)\right] -  \beta^{\frac{1}{4}}\sum_{\rho} R_{\rho}(\beta).
\end{align}
Adding \eqref{Symmetry in alpha beta} and \eqref{Symmetry in beta alpha}, we obtain
\begin{align}\label{alpha beta residue equal}
\alpha^{\frac{1}{4}}\sum_{\rho} R_{\rho}(\alpha) &= - \beta^{\frac{1}{4}}\sum_{\rho} R_{\rho}(\beta).
\end{align}
We now substitute the relation \eqref{alpha beta residue equal} into \eqref{Symmetry in alpha beta}, which yields
\begin{align*}
\alpha^{\frac{1}{4}}\left[L_{\mathbb{F}, -k}\left(\alpha\right) + R_0(\alpha) + \frac{1}{2}\sum_{\rho} R_{\rho}(\alpha)\right] &= \beta^{\frac{1}{4}}\left[L_{\mathbb{F}, -k}\left(\beta\right) + R_0(\beta) + \frac{1}{2}\sum_{\rho} R_{\rho}(\beta)\right].
\end{align*}
Putting back the value of $\alpha = x$ and $\beta = \frac{1}{x}$ in the above equation, we get
\begin{align}\label{Another form of L trans}
L_{\mathbb{F}, -k}\left(x\right) + R_0(x) + \frac{1}{2}\sum_{\rho} R_{\rho}(x) = \frac{1}{\sqrt{x}}\left[L_{\mathbb{F}, -k}\left(\frac{1}{x}\right) + R_0\left(\frac{1}{x}\right) + \frac{1}{2}\sum_{\rho} R_{\rho}\left(\frac{1}{x}\right)\right].
\end{align}
Thus, we have
\begin{align*}
U_{\mathbb{F}, -k}\left(\frac{1}{x}\right) = \sqrt{x}U_{\mathbb{F}, -k}(x),
\end{align*}
where $U_{\mathbb{F}, -k}(x) = L_{\mathbb{F}, -k}\left(x\right) + R_0(x) + \frac{1}{2}\sum_{\rho} R_{\rho}(x)$, which is same as in \eqref{U_F, k(x)}. This completes the proof of \eqref{Theta reln for dedekind zeta power -k}.

We now proceed to prove the functional equation for $\zeta_{\mathbb{F}}^{-k}(s)$ by assuming theta relation \eqref{Theta reln for dedekind zeta power -k}. It follows from \eqref{Defn of Z without tilde}, for $-1< \Re(s) <0$, that
\begin{align*}
\Gamma^{kr_1}\left(\frac{s}{2}\right)\Gamma^{kr_2}(s) = \int_0^\infty t^{s-1}{Z}_{kr_1, kr_2}(t)\, \text{d}t.
\end{align*}
Putting $t = \left(\frac{2^{kr_2}\pi^{kd/2}\sqrt{x}}{n\sqrt{D^k}} \right)$, one gets
\begin{align}\label{Change of variable for negative k}
\left(\frac{D}{4^{r_2}\pi^d}\right)^{\frac{ks}{2}}\Gamma^{kr_1}\left(\frac{s}{2}\right)\Gamma^{kr_2}(s)n^{s} = \frac{1}{2}\int_0^\infty x^{\frac{s}{2}-1}{Z}_{kr_1, kr_2}\left(\frac{2^{kr_2}\pi^{kd/2} \sqrt{x}}{n\sqrt{D^k}} \right)\, \text{d}x.
\end{align}
Multiplying both sides of \eqref{Change of variable for negative k} by $\frac{\mu_{\mathbb{F}, k}(n)}{n}$, summing over $n$, and referring to the definition \eqref{Defn of L} of $L_{\mathbb{F}, -k}\left(x\right)$, we get
\begin{align*}
\Lambda_{\mathbb{F}}^{k}(s) &= \frac{1}{2}\int_0^\infty x^{\frac{s}{2}-1}\sum_{n=1}^{\infty}\frac{\mu_{\mathbb{F}, k}(n)}{n}{Z}_{kr_1, kr_2}\left(\frac{2^{kr_2}\pi^{kd/2} \sqrt{x}}{n\sqrt{D^k}} \right)\, \text{d}x \\
&= \frac{1}{2}\int_0^\infty x^{\frac{s}{2}-1}L_{\mathbb{F}, -k}(x) \, \text{d}x \\
&= \frac{1}{2}\int_0^1 x^{\frac{s}{2}-1}L_{\mathbb{F}, -k}(x) \text{d}x + \frac{1}{2}\int_1^\infty x^{\frac{s}{2}-1}L_{\mathbb{F}, -k}(x)\text{d}x \\
&= \frac{1}{2}\int_0^1 x^{\frac{s}{2}-1}L_{\mathbb{F}, -k}(x) \text{d}x + \frac{1}{2}\int_0^1 x^{-\frac{s}{2}-1}L_{\mathbb{F}, -k}\left(\frac{1}{x}\right) \text{d}x.
\end{align*}
Using \eqref{Another form of L trans}, which is same as \eqref{Theta reln for dedekind zeta power -k}, for the value of $L_{\mathbb{F}, -k}\left(\frac{1}{x} \right)$, we have
\begin{align}
\Lambda_{\mathbb{F}}^{k}(s) &= \frac{1}{2}\int_0^1 x^{-\frac{s}{2}-1}\left[\sqrt{x}L_{\mathbb{F}, -k}(x) + \sqrt{x}R_0(x) + \frac{\sqrt{x}}{2}\sum_{\rho} R_{\rho}(x) - R_0\left(\frac{1}{x}\right) - \frac{1}{2}\sum_{\rho} R_{\rho}\left(\frac{1}{x}\right)\right] \text{d}x \nonumber\\
&+ \frac{1}{2}\int_0^1 x^{\frac{s}{2}-1}L_{\mathbb{F}, -k}(x)\text{d}x, \nonumber \\
&= \frac{1}{2}\int_0^1 \frac{x^{-\frac{s}{2}}}{x}\left[\sqrt{x}R_0(x) - R_0\left(\frac{1}{x}\right) + \frac{\sqrt{x}}{2}\sum_{\rho} R_{\rho}(x) - \frac{1}{2}\sum_{\rho} R_{\rho}\left(\frac{1}{x}\right)\right] \text{d}x  \nonumber\\ 
& + \frac{1}{2}\int_0^1 (x^{\frac{-s-1}{2}} + x^{\frac{s}{2}-1})L_{\mathbb{F}, -k}(x) \text{d}x. \label{Symmetry of funl eqn general for negative k}
\end{align}
The second integral in the above expression \eqref{Symmetry of funl eqn general for negative k} is symmetric over $s$ and $1-s$. We denote the first integral in \eqref{Symmetry of funl eqn general for negative k} by 
\begin{align*}
K(s) &:= \int_0^1 x^{-\frac{s}{2}-1}\left[\sqrt{x}R_0(x) - R_0\left(\frac{1}{x}\right) + \frac{\sqrt{x}}{2}\sum_{\rho} R_{\rho}(x) - \frac{1}{2}\sum_{\rho} R_{\rho}\left(\frac{1}{x}\right) \right] \text{d}x \\
&= \int_0^1 \left[x^{\frac{-s-1}{2}}R_0(x) - x^{-\frac{s}{2}-1}R_0\left(\frac{1}{x}\right)\right] \text{d}x + \frac{1}{2}\sum_{\rho}\int_0^1 \left[x^{\frac{-s-1}{2}}R_{\rho}(x) - x^{-\frac{s}{2}-1}R_{\rho}\left(\frac{1}{x}\right)\right] \text{d}x. 
\end{align*}
From \eqref{Def R_0(x)}, one can observe that $R_0(x)$ is a polynomial of degree $kr-1$ in $\mathbb{C}[\log x]$. So, let us suppose that 
\begin{align*}
R_0(x) &= a_0 + a_1 \log x + \cdots + a_{kr-1}(\log x)^{kr-1} \\
\implies R_0\left(\frac{1}{x}\right) &= a_0 - a_1 \log x + \cdots + (-1)^{kr-1}a_{kr-1}(\log x)^{kr-1}.
\end{align*}
Furthermore, from \eqref{Def R_rho(x)}, $R_{\rho}(x)$ is also a polynomial of the form $x^{-\frac{\rho}{2}}g(x)$, where $g(x)$ is some polynomial of degree $k-1$ in $\mathbb{C}[\log x]$. So, we write $R_{\rho}(x)$ as
\begin{align}
R_{\rho}(x) &= \frac{1}{x^{\frac{\rho}{2}}}\left(b_0 + b_1 \log x + \cdots + b_{k-1}(\log x)^{k-1}\right) \nonumber \\
\implies R_{\rho}\left(\frac{1}{x}\right) &= x^{\frac{\rho}{2}}\left(b_0 - b_1 \log x + \cdots + (-1)^{k-1}b_{k-1}(\log x)^{k-1}\right). \nonumber
\end{align}
Hence, using the above values of $R_0(x), R_0\left( \frac{1}{x} \right), R_\rho(x)$ and $R_\rho\left( \frac{1}{x} \right)$ in $K(s)$, we see that
{\allowdisplaybreaks
\begin{align*}
K(s) &= \int_0^1 a_0\left[x^{\frac{-s-1}{2}} - x^{-\frac{s}{2}-1}\right]\text{d}x + \int_0^1 a_1\log x \left[x^{\frac{-s-1}{2}} + x^{-\frac{s}{2}-1}\right]\text{d}x + \cdots \\
&+ \int_0^1 a_{kr-1}(\log x)^{kr-1}\left[x^{\frac{-s-1}{2}} + (-1)^{kr}x^{-\frac{s}{2}-1}\right]\text{d}x \\
&+ \frac{1}{2}\sum_{\rho}\bigg(\int_0^1 b_0\left[x^{\frac{-s-1-\rho}{2}} - x^{\frac{\rho - s}{2}-1}\right]\text{d}x + \int_0^1 b_1\log x \left[x^{\frac{-s-1-\rho}{2}} + x^{\frac{\rho - s}{2}-1}\right]\text{d}x + \cdots \\
&+ \int_0^1 b_{k-1}(\log x)^{k-1}\left[x^{\frac{-s-1-\rho}{2}} + (-1)^{k}x^{\frac{\rho - s}{2}-1} \right]\text{d}x \bigg).
\end{align*}}
One can check that the above integrals converge for $\Re(s)<0$ and  $\Re(s+\rho)<1$, which is true as $\rho$ lies in the critical strip.   
Solving the above integrals by substituting $x = e^u$ and integrating by parts, we get

\begin{align*}
K(s) &= 2a_0\left[\frac{1}{s} + \frac{1}{1-s}\right] - 4a_1\left[\frac{1}{s^2} + \frac{1}{(1-s)^2}\right] + \cdots \\
&- (-2)^{kr}a_{kr-1}(kr-1)!\left[\frac{1}{s^{kr}} + \frac{1}{(1-s)^{kr}}\right] + \frac{1}{2}\sum_{\rho} \bigg(2b_0\left[\frac{1}{s-\rho} + \frac{1}{1-s-\rho}\right] \\
&- 4b_1\left[\frac{1}{(s-\rho)^2} + \frac{1}{(1-s-\rho)^2}\right] + \cdots - (-2)^{k}b_{k-1}(k-1)!\left[\frac{1}{(s-\rho)^{k}} + \frac{1}{(1-s-\rho)^{k}}\right]\bigg).
\end{align*}
Therefore, from the above expression it is evident that $K(s)$ is symmetric over $s$ and $1-s$. This gives the symmetry of \eqref{Symmetry of funl eqn general for negative k} over $s$ and $1-s$. This completes the proof of Theorem \ref{BM3}.  
\end{proof}

\begin{proof}[Corollary \rm{\ref{Theta reln for zeta power -k}}][]
When $\mathbb{F} = \mathbb{Q}$, $\zeta_\mathbb{F}(s) = \zeta(s)$. From \eqref{Mobius for Q}, we have $\mu_{\mathbb{Q}, k}(n) = \mu_{k}(n)$. Using \eqref{Z without tilde at real} for the value of $Z_{k, 0}\left(\frac{\pi^{k/2} \sqrt{x}}{n}\right)$ in Theorem \ref{BM3} and observing that the residual term $R_0(x)$ vanishes as we are dealing with $\mathbb{F} = \mathbb{Q}$, this completes the proof. 
\end{proof}

\begin{proof}[Corollary \rm{\ref{Theta reln for zeta power -2}}][]
When $k=2$, from \eqref{Bessel fn formula}, we have 
$$
V\left(\frac{\pi^{k} x}{n^2} \bigg | \bar{0}_{k} \right) = 2K_0\left(\frac{2\pi\sqrt{x}}{n}\right),
$$
where $K_0(x)$ is the modified Bessel function of second kind. The residual term $\mathfrak{R}_0(x)$ in Corollary \ref{Theta reln for zeta power -k} is given by
\begin{align*}
\mathfrak{R}_0\left(\frac{\pi \sqrt{x}}{n}\right) &= \lim_{s \rightarrow 0}\frac{{\rm d}}{{\rm d}s}\left(s^2 \Gamma^2\left(\frac{s}{2}\right)\left(\frac{\pi\sqrt{x}}{n}\right)^{-s}\right).\\
\end{align*}
To solve the above limit, we use the Laurent series expansions given below
\begin{align*}
s^2\Gamma^2\left(\frac{s}{2}\right) &= 4 - 4\gamma s + \gamma^2 s^2 + \cdots, \\
\left(\frac{\pi\sqrt{x}}{n}\right)^{-s} &= 1 - s\log\left(\frac{\pi\sqrt{x}}{n}\right) + \cdots.
\end{align*}
Hence, $\mathfrak{R}_0\left(\frac{\pi \sqrt{x}}{n}\right) = -4\gamma - 4\log\left(\frac{\pi \sqrt{x}}{n}\right)$.
So, we have
\begin{align*}
U_{\mathbb{Q}, -k}(x) &= 4\sum_{n=1}^{\infty}\frac{\mu_{2}(n)}{n}\left[K_0\left(\frac{2\pi\sqrt{x}}{n}\right) + \gamma + \log\left(\frac{\pi \sqrt{x}}{n}\right) \right] + \frac{1}{2}\sum_{\rho} R_{\rho}(x) \\
&= 4\sum_{n=1}^{\infty}\frac{\mu_{2}(n)}{n}\left[K_0\left(\frac{2\pi\sqrt{x}}{n}\right) + \log\left(\frac{\pi \sqrt{x}}{n}\right) \right] + \frac{1}{2}\sum_{\rho} R_{\rho}(x),
\end{align*}
where in the final step, we have used the fact that $\sum_{n=1}^{\infty}\frac{\mu_{2}(n)}{n} = 0$, see \eqref{Sum of 1/Dzeta(1)}. 
Hence using all the above values in Corollary \ref{Theta reln for zeta power -k}, we have the desired result.
\end{proof}

\begin{proof}[Theorem \rm{\ref{BM2}}][]
When $\mathbb{F} = \mathbb{Q}$, the Dedekind zeta function is nothing but the Riemann zeta function and existence of infinitely many non-trivial zeros for $\zeta(s)$ on the critical line has been proved by Hardy \cite{Hardy}. For the case of any quadratic field $\mathbb{F}$, the Dedekind zeta function can be written as $\zeta_\mathbb{F}(s) = \zeta(s)L(s, \chi)$ for some Dirichlet character $\chi$. Since $L(s, \chi)$ is entire, so $\zeta_\mathbb{F}(s)$ has infinitely many non-trivial zeros on the critical line due to $\zeta(s)$. In this proof, we consider the number fields $\mathbb{F}$ of degree $d \geq 3$.
First, let us recall the function 
\begin{align}
\Xi_{\mathbb{F}}(t)= \left(-\frac{1}{8} - \frac{t^2}{2} \right)\left(\frac{D}{4^{r_2}\pi^d}\right)^{\frac{1}{4}+\frac{it}{2}}\Gamma^{r_1}\left(\frac{1}{4} + \frac{it}{2}\right)\Gamma^{r_2}\left(\frac{1}{2} + it\right)\zeta_{\mathbb{F}}\left(\frac{1}{2} + it\right).
\end{align}
which we studied in Section \ref{Key Tools}.
One can observe that $\Xi_{\mathbb{F}}(t)$ vanishes only when $\zeta_\mathbb{F}\left(\frac{1}{2} + it\right)$ is zero for some $t \in \mathbb{R}$ since all the other factors never vanish for any real $t$. This implies that each real zero of $\Xi_{\mathbb{F}}(t)$ corresponds to a zero of $\zeta_\mathbb{F}(s)$ on the half line. So, our main aim is to prove that $\Xi_{\mathbb{F}}(t)$ has infinitely many real zeros. From Lemma \ref{Xi as even and real fn}, we know that the function $\Xi_{\mathbb{F}}(t)$ is even and real for any $t \in \mathbb{R}$.

Now, we define a function $f(t)$ such that $f(t) := |\phi(it)|^2 = \phi(it)\phi(-it), \,\, t \in \mathbb{R}$, where $\phi(s)$ is an analytic function with the characterisation $\overline{\phi(s)} = \phi(\bar{s})$.
Let us examine the following integral, for $z \in \mathbb{C}$,
\begin{align}\label{Defn of Phi}
\Phi(z) = \int_0^\infty f(t)\Xi_{\mathbb{F}}(t)\cos(zt) \text{d}t.
\end{align}
We write $\cos(zt) = \frac{e^{izt} + e^{-izt}}{2}$ and substitute $e^z=y$ to see
\begin{align*}
\Phi(z) &= \frac{1}{2}\int_0^\infty f(t)\Xi_{\mathbb{F}}(t)(y^{it} + y^{-it})\text{d}t \\
&= \frac{1}{2}\int_0^\infty f(t)\Xi_{\mathbb{F}}(t)y^{it}\text{d}t + \frac{1}{2}\int_0^\infty f(t)\Xi_{\mathbb{F}}(t)y^{-it}\text{d}t.
\end{align*}
Now we change the variable $t$ by $-t$ in the second integral and use the fact that $f(t)$ and $\Xi_{\mathbb{F}}(t)$ are real and even functions of $t$. Then, we have
\begin{align*}
\Phi(z) &= \frac{1}{2}\int_0^\infty f(t)\Xi_{\mathbb{F}}(t)y^{it}\text{d}t + \frac{1}{2}\int_{-\infty}^0 f(t)\Xi_{\mathbb{F}}(t)y^{it}\text{d}t \\
&= \frac{1}{2}\int_{-\infty}^\infty f(t)\Xi_{\mathbb{F}}(t)y^{it}\text{d}t \\
&= \frac{1}{2}\int_{-\infty}^\infty \phi(it)\phi(-it)\xi_{\mathbb{F}}\left(\frac{1}{2} + it\right) y^{it}\text{d}t.
\end{align*}
Letting $\frac{1}{2}+it = s$ and $\phi(s) = \frac{1}{s+\frac{1}{2}}$ in the above integral and using \eqref{xi function}, one can see that
\begin{align}
\Phi(z) &= \frac{1}{2i\sqrt{y}}\int_{\frac{1}{2} - i\infty}^{\frac{1}{2} + i\infty}\frac{1}{s(1-s)}\frac{s(s-1)}{2}\Omega_{\mathbb{F}}(s)y^s \text{d}s \nonumber \\
&= -\frac{1}{2\pi i}\times\frac{\pi}{2\sqrt{y}}\int_{\frac{1}{2} - i\infty}^{\frac{1}{2} + i\infty}\Omega_{\mathbb{F}}(s)y^s \text{d}s \nonumber \\
&= -\frac{\pi}{2\sqrt{y}}\left(W_{\mathbb{F},1}\left(\frac{1}{y^2}\right) + C_\mathbb{F}2^{r_1}(1+y)\right). \label{Phi after Lemma 3.2}
\end{align}
Here, we have used Lemma \ref{Lemma 1} in the last step. Since $y = e^z$ and $f(t) = \phi(it)\phi(-it) = \frac{1}{t^2 + \frac{1}{4}}$, so from \eqref{Defn of Phi} and \eqref{Phi after Lemma 3.2}, we have
\begin{align}\label{Final Phi with cos}
\Phi(z) = \int_0^\infty \frac{\Xi_{\mathbb{F}}(t)}{t^2 + \frac{1}{4}}\cos(zt)\text{d}t = -\frac{\pi}{2}\left[e^{-\frac{z}{2}}W_{\mathbb{F},1}(e^{-2z}) + C_\mathbb{F}2^{r_1}\left(e^{-\frac{z}{2}} + e^{\frac{z}{2}}\right)\right].
\end{align}
Now letting $z=-i\alpha$ in \eqref{Final Phi with cos} and using the fact that $\cos(-it) = \cosh(t)$, we get
\begin{align}
\int_0^\infty \frac{\Xi_{\mathbb{F}}(t)}{t^2 + \frac{1}{4}}\cosh(\alpha t)\text{d}t &= -\frac{\pi}{2}\left[C_\mathbb{F}2^{r_1}\left(e^{\frac{-i\alpha}{2}} + e^{\frac{i\alpha}{2}}\right) + e^{\frac{i\alpha}{2}}W_{\mathbb{F},1}\left(e^{2i\alpha}\right)\right] \nonumber \\
&= -\frac{\pi}{2}\left[2C_\mathbb{F}2^{r_1}\left(\frac{e^{\frac{i\alpha}{2}} + e^{-\frac{i\alpha}{2}}}{2}\right) + e^{\frac{i\alpha}{2}}W_{\mathbb{F},1}(e^{2i\alpha})\right] \nonumber \\
&= -\pi C_\mathbb{F}2^{r_1}\cos\left(\frac{\alpha}{2}\right) - \frac{\pi e^{\frac{i\alpha}{2}}}{2}W_{\mathbb{F},1}(e^{2i\alpha}). \label{Phi with cosh}
\end{align}
Note that the above identity is valid for $|\alpha| < \frac{\pi d}{4}$ since the factor $W_{\mathbb{F},1}(x)$ converges for $|\Arg(x)| < \frac{\pi d}{2}$, see \eqref{Theta reln for dedekind zeta}. Next, we need to show that the equation \eqref{Phi with cosh} can be differentiated with respect to $\alpha$ any number of times. One can easily prove that
$$
\Xi_{\mathbb{F}}(t) = O\left(|t|^A D^{\frac{1}{4} + \epsilon} e^{-\frac{\pi dt}{4}} \right)
$$ 
for some positive $A$, by using the bounds $\zeta_{\mathbb{F}}\left(\frac{1}{2} + it \right) = O(|t|^{\frac{d}{4} + \epsilon} D^{\frac{1}{4} + \epsilon})$ and $|\Gamma(\frac{1}{2} + i t)| = O(e^{-\frac{\pi}{2}|t|})$ as $|t| \rightarrow \infty$.  Also, it is well known that the function $\cosh(x)$ is bounded above by $e^{|x|}$. Using the above bounds for $\Xi_{\mathbb{F}}(t)$ and $\cosh(x)$, we see that the improper integral in \eqref{Phi with cosh} is absolutely convergent for $|\alpha| < \frac{\pi d}{4}$. Thus, we differentiate \eqref{Phi with cosh} $2n$-times with respect to $\alpha$, to obtain
\begin{align}\label{differentiated 2n times}
\int_0^\infty \frac{\Xi_{\mathbb{F}}(t)}{\left(t^2 + \frac{1}{4}\right)}t^{2n}\cosh(\alpha t)\text{d}t &= (-1)^{n+1}\frac{\pi C_\mathbb{F}2^{r_1}\cos\left(\frac{\alpha}{2}\right)}{2^{2n}} - \frac{\pi}{2} \frac{\text{d}^{2n}}{\text{d}\alpha^{2n}}\left(e^{\frac{i\alpha}{2}} W_{\mathbb{F},1}(e^{2i\alpha})\right).
\end{align}
Our next claim is that for any fixed $n$, the second term on the right hand side of the above equation tends to zero as $\alpha \rightarrow \frac{\pi}{2}$. 
From Corollary \ref{Jacobi theta for dedekind zeta}, we have
\begin{align*}
W_{\mathbb{F}, 1}\left(e^{-2i\alpha}\right) = e^{i\alpha}W_{\mathbb{F}, 1}(e^{2i\alpha}), \quad \text{for} \,\,\, |\alpha| < \frac{\pi d}{4}.
\end{align*}
As we are dealing with number fields of degree $d \geq 3$, so from the above equation, one can easily see that $W_{\mathbb{F},1}(e^{2i\alpha})$ and all its derivatives tend to zero as $\alpha \rightarrow \frac{\pi}{2}$ along any path. Therefore, from \eqref{differentiated 2n times}, we finally have
\begin{align}\label{Integral with limit}
\lim_{\alpha \rightarrow \frac{\pi}{2}} \int_0^\infty \frac{\Xi_{\mathbb{F}}(t)}{\left(t^2 + \frac{1}{4}\right)}t^{2n}\cosh(\alpha t)\text{d}t &= (-1)^{n+1}\frac{C_\mathbb{F}2^{r_1}\pi\cos\left(\frac{\pi}{4}\right)}{2^{2n}}.
\end{align}
Now let us suppose that $\Xi_{\mathbb{F}}(t)$ eventually takes one sign. Without the loss of generality, we assume $\Xi_{\mathbb{F}}(t)$ to be positive for $t \geq T$, for some large $T$. Then, we let
\begin{align*}
\lim_{\alpha \rightarrow \frac{\pi}{2}} \int_T^\infty \frac{\Xi_{\mathbb{F}}(t)}{\left(t^2 + \frac{1}{4}\right)}t^{2n}\cosh(\alpha t)\text{d}t = M.
\end{align*}
This $M$ will be positive since all the functions are positive in $[T,\infty)$. Thus for any $T' > T$,
\begin{align*}
\int_T^{T'} \frac{\Xi_{\mathbb{F}}(t)}{\left(t^2 + \frac{1}{4}\right)}t^{2n}\cosh(\alpha t)\text{d}t \leq M.
\end{align*}
Hence, letting $\alpha \rightarrow \frac{\pi}{2}$, we arrive
\begin{align*}
\int_T^{T'} \frac{\Xi_{\mathbb{F}}(t)}{\left(t^2 + \frac{1}{4}\right)}t^{2n}\cosh\left(\frac{\pi t}{2}\right)\text{d}t \leq M.
\end{align*}
Taking $T' \rightarrow \infty$ in the above inequality proves the fact that the integral 
$$
\int_0^\infty \frac{\Xi_{\mathbb{F}}(t)}{\left(t^2 + \frac{1}{4}\right)}t^{2n}\cosh\left(\alpha t\right)\text{d}t 
$$ 
is uniformly convergent with respect to $\alpha$. Thus, from \eqref{Integral with limit}, we deduce that
\begin{align*}
\int_0^\infty \frac{\Xi_{\mathbb{F}}(t)}{\left(t^2 + \frac{1}{4}\right)}t^{2n}\cosh\left(\frac{\pi t}{2}\right)\text{d}t = (-1)^{n+1}\frac{C_\mathbb{F}2^{r_1}\pi\cos\left(\frac{\pi}{4}\right)}{2^{2n}}, \quad \forall n.
\end{align*}
One can see that, from \eqref{Laurent series_at s=0_1st coeff}, $C_\mathbb{F}$ is negative for any number field $\mathbb{F}$, so for $n$ odd, it follows that
{\allowdisplaybreaks
\begin{align}\label{Bound for integral}
\int_0^\infty \frac{\Xi_{\mathbb{F}}(t)}{\left(t^2 + \frac{1}{4}\right)}t^{2n}\cosh\left(\frac{\pi t}{2}\right)\text{d}t &< 0 \nonumber \\
\implies \int_T^\infty \frac{\Xi_{\mathbb{F}}(t)}{\left(t^2 + \frac{1}{4}\right)}t^{2n}\cosh\left(\frac{\pi t}{2}\right)\text{d}t &< -\int_0^T \frac{\Xi_{\mathbb{F}}(t)}{\left(t^2 + \frac{1}{4}\right)}t^{2n}\cosh\left(\frac{\pi t}{2}\right)\text{d}t \nonumber \\
&< \ell \int_0^T t^{2n} \text{d}t \nonumber \\
&< \ell \, T^{2n+1}.
\end{align}}
In the penultimate step, we have used the fact that $-\frac{\Xi_{\mathbb{F}}(t)}{\left(t^2 + \frac{1}{4}\right)}\cosh\left(\frac{\pi t}{2}\right)$ is a continuous function in the closed interval $[0,T]$ and hence it can be bounded by a positive number $\ell$, where $\ell$ is independent of $n$. Since $\Xi_{\mathbb{F}}(t)$ is positive for $t \geq T$, so
\begin{align*}
\frac{\Xi_{\mathbb{F}}(t)}{\left(t^2 + \frac{1}{4}\right)}\cosh\left(\frac{\pi t}{2}\right) \geq c, \quad \text{for} \,\,\, \delta T \leq t \leq T(\delta + 1),
\end{align*}
where $\delta > 1$ and $c$ is some positive integer. Using the above lower bound in \eqref{Bound for integral}, we have
{\allowdisplaybreaks
\begin{align*}
\ell\,T^{2n+1} > \int_T^\infty \frac{\Xi_{\mathbb{F}}(t)}{\left(t^2 + \frac{1}{4}\right)}t^{2n}\cosh\left(\frac{\pi t}{2}\right)\text{d}t &\geq \int_{\delta T}^{T(\delta + 1)} \frac{\Xi_{\mathbb{F}}(t)}{\left(t^2 + \frac{1}{4}\right)}t^{2n}\cosh\left(\frac{\pi t}{2}\right)\text{d}t \\
&\geq c\int_{\delta T}^{T(\delta + 1)} t^{2n} \text{d}t \\
&\geq c\delta^{2n}T^{2n+1}.
\end{align*}}
Thus, $\ell > \delta^{2n} c$, which fails to hold for sufficiently large $n$, as $\delta > 1$. Hence our assumption is wrong. Therefore, $\Xi_{\mathbb{F}}(t)$ must change sign infinitely often, which implies that it has infinitely many real zeros and so has $\zeta_{\mathbb{F}}(s)$ on the critical line. This finishes the proof of Theorem \ref{BM2}.

\end{proof}

\section{{\bf Concluding Remarks}}
Using the Jacobi theta relation \eqref{Jacobi theta transformation}, Hardy proved that there are infinitely many non-trivial zeros of the Riemann zeta function on the critical line. Interestingly, the Jacobi theta relation is equivalent to the functional equation of $\zeta(s)$. In 1936, Ferrar proved Ramanujan-Koshliakov formula \eqref{Koshliakov Formula} using the functional equation for $\zeta^2(s)$. Moreover, he proved a general theta relation \eqref{Theta reln for zeta power k} assuming the functional equation for $\zeta^k(s)$, and we showed that the two are in fact equivalent, see Corollary \ref{Equivalence for zeta power k}. 
Motivated by this work, we have established an equivalence of the functional equation for $\zeta^{-k}(s)$ with $k \in \mathbb{N}$, see Corollary \ref{Theta reln for zeta power -k}. In particular, we deduced that the functional equation for $\zeta^{-1}(s)$ is equivalent to the well-known identity of Hardy, Littlewood and Ramanujan \eqref{Ramanujan identity for 1/zeta}.
It is straightforward to observe that the functional equations for $\zeta(s)$ and $\zeta^{-1}(s)$ are equivalent, which in turn implies that the corresponding theta identities, namely, Jacobi theta relation \eqref{Jacobi theta transformation} and Hardy, Littlewood and Ramanujan identity \eqref{Ramanujan identity for 1/zeta} are also equivalent. To the best of our knowledge, this observation has not been noted explicitly, in the existing literature. 

It is worthwhile to note that Ferrar \cite{Ferrar1936} was mainly interested to study solutions to the following theta relation 
\begin{align}\label{Ferrar eqn}
F(x) = F\left(\frac{1}{x}\right).
\end{align}
In this paper, we obtained number field analogue of the theta relation which is equivalent to the functional equation for $\zeta_\mathbb{F}^k(s)$ for any $k \in \mathbb{Z}^*$, see Theorems \ref{BM1}, \ref{BM3}. In fact, the theta relations \eqref{Theta reln for dedekind zeta power k} and \eqref{Theta reln for dedekind zeta power -k} arising from Theorems \ref{BM1}, \ref{BM3}, respectively, provide a new set of solutions to Ferrar's equation \eqref{Ferrar eqn}. 

Another important part of this paper involves showing the existence of infinitely many non-trivial zeros of $\zeta_\mathbb{F}(s)$ on the critical line using the theta relation \eqref{Theta reln for dedekind zeta}, see Theorem \ref{BM2}. 

Some other works include proving the existence of infinite number of non-trivial zeros of various other $L$-functions on the critical line. Zeros of $L$-functions of degree two in the Selberg class have been studied by Mukhopadhyay et. al. \cite{MSR2008}. Meher et. al. \cite{MPK2017, MPS2019} showed that there are infinitely many zeros on the critical line for the $L$-functions attached to modular forms of half-integral weight. Kim \cite{Kim} studied the infinitude of zeros of additively twisted $L$-function on the critical line. For further insights, one can see \cite{Conrey, DKMZ2018, R2025, RY24, Zhang}.

Establishing the existence of infinitely many non-trivial zeros on the critical line remains a challenging problem for higher degree $L$-functions. We speculate that every nice $L-$function will have an analogous theta type relation that might be useful to prove the existence of infinitely many non-trivial zeros on the critical line. Since the Dedekind zeta function serves as a key example, this approach could be useful for a broader class of $L$-functions. In an upcoming paper, we are working on the same problem for the Rankin-Selberg $L-$function and symmetric square $L-$function.

For the clarity of the reader, we present a table below which shows equivalence of several theta-type identities. The theta relations mentioned in each row are equivalent through their corresponding functional equations of the zeta functions listed in the middle column. This establishes equivalences between known classical identities, their dual forms, and further extends them to their analogues over number fields.
\vspace{-0.5cm}
\begin{table}[H]
\centering
\renewcommand{\arraystretch}{1.4}
\caption{Equivalence between theta-type identities and functional equations of $\zeta^k(s)$ and $\zeta_{\mathbb{F}}^k(s)$, $k \in \mathbb{Z}^*$.}
\vspace{0.2cm}
\begin{tabular}{|p{5cm}|c|p{5cm}|}
\hline
\textbf{Theta-type relations} & \textbf{Functional equation} & \textbf{Theta-type relations} \\
\hline
Jacobi theta relation \eqref{Jacobi theta transformation} & $\zeta(s) \Longleftrightarrow \zeta^{-1}(s)$ & Hardy, Littlewood and Ramanujan identity \eqref{Ramanujan identity for 1/zeta} \\
\hline
Ramanujan-Koshliakov identity \eqref{Koshliakov Formula} & $\zeta^{2}(s) \Longleftrightarrow \zeta^{-2}(s)$ & New theta identity \eqref{New id for zeta 2inv}  \\
\hline
Ferrar's identity \eqref{Theta reln for zeta power k} & $\zeta^{k}(s) \Longleftrightarrow \zeta^{-k}(s)$ & A generalization \eqref{New id for zeta inv} of Hardy, Littlewood and Ramanujan \\
\hline
Number field analogue \eqref{Theta reln for dedekind zeta} of Jacobi theta relation  & $\zeta_{\mathbb{F}}(s) \Longleftrightarrow \zeta_{\mathbb{F}}^{-1}(s)$ & An identity of Dixit, Gupta and Vatwani \eqref{Id of Dixit etal}\\
\hline
Number field analogue \eqref{Theta reln for dedekind zeta power k} of Ferrar's identity & $\zeta_{\mathbb{F}}^{k}(s) \Longleftrightarrow \zeta_{\mathbb{F}}^{-k}(s)$ & A generalization \eqref{Theta reln for dedekind zeta power -k} of Dixit, Gupta and Vatwani \\
\hline
\end{tabular}
\end{table}
Equivalence for the functional equation of $L-$functions have been studied by many mathematicians. Bochner \cite{Bochner} studied a class of functions that satisfy the functional equation with single gamma factor and showed that it is equivalent to a modular-type relation which is nothing but theta-type relation. Chandrasekharan and Narasimhan \cite{CN1961, CN1962} also studied different equivalent identities for the functional equation of $L-$functions with multiple gamma factors. For further information, one can refer to a recent work of Roy, Sahoo and Vatwani \cite{RSV24}.

{\bf Acknowledgement}
We express our sincere thanks to Prof. Bruce Berndt and Prof. Atul Dixit for reading the manuscript and suggesting the related papers on this literature. The first author acknowledges support from the Prime Minister Research Fellowship (PMRF), Government of India, under Grant No. 2102227. The last author gratefully acknowledges funding from the Science and Engineering Research Board (SERB), India, through the MATRICS grant (File No. MTR/2022/000545) and the CRG grant (File No. CRG/2023/002122). We would also like to thank IIT Indore for fostering a supportive research environment.

{\bf Conflict of Interest}
The authors declare that there are no conflicts of interest.

{\bf Data Availability Statement}
The authors confirm that this manuscript does not involve any associated data.

\end{document}